\documentclass[a4paper,11pt]{article}

\usepackage{colortbl}
\usepackage{multirow}%
\usepackage{fullpage}
\usepackage{amsmath}%
\usepackage{amsthm}%
\usepackage{amsfonts}%
\usepackage{amssymb}%
\usepackage{graphicx}
\usepackage{enumerate}
\usepackage{dsfont}
\usepackage[affil-it]{authblk}


\newtheorem{theorem}{Theorem}
\newtheorem{corollary}{Corollary}
\newtheorem{proposition}[theorem]{Proposition}
\newtheorem{lemma}[theorem]{Lemma}

\newtheorem{remark}{Remark}

\newcommand{\indep}{\perp\hspace{-.25cm}\perp}

\renewcommand{\P}{\mathbb{P}}
\newcommand{\R}{\mathbb{R}}
\newcommand{\w }[1]{\widehat{#1}}
\renewcommand{\r}{ \rightarrow }
\newcommand{\lr}{ \longrightarrow }

\DeclareMathOperator{\spann}{span}
\DeclareMathOperator{\var}{var}
\DeclareMathOperator{\cov}{cov}

\DeclareMathOperator{\vecv}{vec}

\renewcommand{\t}[1]{\text{#1}}

\begin{document}

\title{An empirical process view of inverse regression}

 \author{ Fran\c cois Portier\footnote{Institut de statistique, biostatistique et sciences actuarielles, Universit\'{e} catholique de Louvain, Voie du Roman
Pays 20, B1348 Louvain-la-Neuve, Belgium.
Research supported by Fonds de la Recherche Scientifique (FNRS) A4/5 FC 2779/2014-2017 No.\ 22342320.
Email addresses: \texttt{francois.portier@uclouvain.be}.
  }
 }



\maketitle

\textbf{Abstract:}\ Most of the methods among the inverse regression literature rely on a slicing of the range of
the response variable. Theoretical results are 
usually shown assuming that (i) the slices are fixed while in practice estimators are constructed with 
(ii) random slices that contain the same number of observations. In this paper we obtain the asymptotic 
normality in the case where the slices contains the same number of observations. This issue matter 
since we find a gap between the asymptotic distributions related to both approaches (i) and (ii). Along this 
line, we revisit the asymptotic properties of existing methods such as sliced inverse regression and cumulative inverse regression,
and we also introduce a bootstrap procedure that reproduce accurately the law of certain Cram\'er-von Mises test statistics.
Our approach is based on the stochastic analysis of some empirical processes that lie 
close to a certain subspace of interest called the central subspace.

\bigskip

\noindent \textbf{Key words:} Dimension reduction; Sliced inverse regression; Cumulative slicing estimation; Weak convergence in $l^{\infty}(\R)$; Bootstrap; Test.

\section{Introduction}

Dimension reduction is a powerful tool usually employed to synthesise the dependence between two sets of random variables, say $(X,Y)$ where $X\in \R^p$ is called the vector of predictors and $Y\in \R$ is the variable to explain also called the response variable. Dimension reduction can be used to visualize the dependence in high dimensional data \cite{cook1998}, as well as to construct accurate estimators of the conditional distribution of $Y$ knowing $X$ \cite{hardle1989}. The most common way to model dimension reduction is to assume a certain structure on the conditional distribution of $Y$ given $X$ (see for instance the introduction of \cite{cook2002}). Here we assume that there exists $\beta_0 \in \R^{p\times d_0} $ such that the joint distribution of $(X,Y)$ satisfies
\begin{align}\label{model}
P(Y\in A|X) = P(Y\in A|\beta_0^T X),
 \end{align}
for every Borel set $A\subset \R$. The objective is to estimate the matrix $\beta_0$ or rather, because of identifiability reasons \cite{li1991}, the subspace it generates. This subspace is called the central subspace. To this typical semi-parametric problem, many different approaches have been investigated in the past decades \cite{hardle1989}, \cite{juditsky2001}, \cite{li1991}, \cite{cook2005}. In this paper we follow the idea of inverse regression introduced by Li \cite{li1991}. In spite of suffering from theoretical restriction on $X$ inverse regression often leads to estimators that are very accurate and computationally efficient. Inverse regression methods are widely spread probably because they provide a reasonable trade-off between accuracy and complexity.

Throughout the paper, we will assume that the central subspace is unique. This is known to be true as soon as $X$ has a density (\cite{portier2013}, Theorem 1). For more clarity in the statements we introduce the standardized predictors $Z=\Sigma^{-1/2}(X-E X)$ with $\Sigma=\var(X)$. The standardized central subspace, generated by $\Sigma^{1/2} \beta_0$ is denoted by $E_c$ and we let $P$ be the orthogonal projector on $E_c$.

Inverse regression is based on the following assumption. We say that $X$ satisfies the linearity condition if
\begin{align}
E(Z|PZ) = PZ,\tag{LC}\label{lc}
\end{align}
examples of such distributions include Gaussian distributions, uniform distribution on the sphere, or more generally the class of spherical variables \cite{li1993}. Li noticed in \cite{li1991} that under (\ref{model}) and (\ref{lc}),
\begin{align}\label{caractcentral1}
E(Z|Y) \in  E_c,
\end{align}
with probability $1$. He then proposed to approximate $E_c$ by estimating the subspace generated by $\var(E(Z|Y))$. The estimation is realized through a slicing of the response $Y$. A similar slicing method that has been shown to be more efficient is the minimum discrepancy approach (MD) \cite{cook2005}. When facing regression models with a symmetric link function (often refereed as the SIR pathology), SIR is inconsistent. Li suggested in \cite{li1991} to use second order moments of the predictors. Following this idea, some authors have introduced order 2 moments methods as for instance sliced average variance estimation (SAVE) \cite{cook1991}, directional regression \cite{li2007} and order 2 optimal function \cite{portier2013}. These methods require an additional assumption called the constant covariance condition,
\begin{align}
\var(Z|PZ) = const., \tag{CCV}\label{ccv}
\end{align}
they are based on the result that, under (\ref{model}), (\ref{lc}) and  (\ref{ccv}), it holds that
\begin{align}\label{caractcentral2}
\var( Z|Y)-I \in E_c,
\end{align}
where $I$ is the identity matrix.

As a consequence of Equations (\ref{caractcentral1}) and (\ref{caractcentral2}), the current literature have put the focus on the estimation of subspaces that are generated by conditional quantities. A natural issue which arises is to know whether a nonparametric estimation  is really necessary. On the one hand, some authors have studied the limiting distribution of SIR and SAVE estimators as the slicing becomes more thin \cite{hsing1992}, \cite{zhu1995}, \cite{zhu1996}, \cite{zhu2007}. Though the conditional quantities $E(Z|Y) $ or $\var(Z|Y)$ can not be estimated at rates root $n$, these authors shown that the rate root $n$ is in fact available when estimating moments of these quantities, as for instance $\var(E(Z|Y))$. One the other hand, other authors considered a constant number of slices, so that the length of the slices does not go to $0$ \cite{cook1991}, \cite{cook2005}, \cite{li2007}, \cite{portier2013}. In favour of the latter approach, for order 1 moments methods, one might argue that since
\begin{align*}
E[Z\psi(Y)] \in E_c,
\end{align*}
for any measurable function $\psi$ such that $E[Z\psi(Y)]<\infty$, the whole space $E_c$ will eventually be recovered as soon as the number of function $\psi$ is large (see \cite{portier2013}, Theorem 3). Going further, a natural idea is to consider the estimation of $E_c$ when $\psi$ describe a given class of function without necessarily being a slicing. This can be found in \cite{yin2002}, where the use of polynomial functions are discussed, and in \cite{portier2013} where the optimal choice of $\psi$ among a Hilbert space is considered (see also \cite{gardes2009} for the use of basis functions). In \cite{zhu2010}, the authors consider a sum over a non-countable class of functions: the indicators of sets $\{Y\leq t\}$, when $t$ varies on the real line. The underlying method is an integral based method called cumulative slicing estimation (CUME). In this paper we continue along this line by providing an empirical process view of the problem, by indexing the estimators by the elements of a given class of function.

The first contribution of the paper is the introduction and the study of two empirical processes that get closer to $E_c$ as the number of observations increases, one is based on the first conditional moments of $Z$ knowing $Y$ and the other one rely on the second conditional moments of $Z$ knowing $Y$. Let $\Phi:\R\r [0,1] $ be a distribution function and denote by $\Phi^-$ its generalized inverse, given by
\begin{align*}
\Phi^-(u)=\inf \{t\in \R \ : \ \Phi(t)\geq u\},
\end{align*}
for every $u\in [0,1] $. We define the first moment process as
\begin{align*}
c_{\Phi} (u) = E(Z \mathds 1 _{\{Y\leq \Phi^{-}(u)\}}) ,
\end{align*}
and the second moment process as
\begin{align*}
C _{\Phi}(u) = E((ZZ^T-I) \mathds 1 {\{Y\leq  \Phi^{-}(u)\}}),
\end{align*}
for each $u\in  [0,1]$. Clearly, under (\ref{model}) and (\ref{lc}), $c_{\Phi}(u)\in  E_c$, if moreover (\ref{ccv}) holds then $C_{\Phi}(u)\in E_c$, for every $u\in [0,1]$.

The fact that $c_{\Phi}$ and $ C_{\Phi}$ are indexed by the class of indicator functions plays a key role in our analysis. First the class of indicators is large enough to ensure an exhaustive characterization of $E_c$.  Second it is sufficiently small to enjoy a small metric entropy which is at the root of many nice asymptotic properties of the associated empirical process \cite{vandervaart1996}. Such properties include weak convergence of estimators of $ c_{\Phi}$ and $ C_{\Phi}$ with root $n$ rates, and the validity of some general weighted bootstrap procedures.

The function $\Phi$ is a user-selected function. As in copula modelling, to alleviate the effect of the marginal distribution of $Y$ in the estimation, it is convenient to ``uniformize" the variable $Y$. This is done by choosing $\Phi$ equal to $F$: the cumulative distribution function (cdf) of $Y$. Since $F$ is unknown, such a choice involves a little more technicalities in the proof but it leads to an accurate and computationally simple rank-based estimator. In \cite{fermanian2004}, the authors studied the weak convergence of the empirical copula process. Following their approach, our theoretical study is based on both the delta-method for stochastic processes and a ``trick'' allowing us to consider $Y$ as uniformly distributed (see Remark \ref{remranktransform}).

The study of these processes is conducted in Section \ref{s2}. It shall be the basis of our study about inverse regression, the main point of which are outlined bellow.
\begin{enumerate}[i)]
\item (see Section \ref{s31} and \ref{s32}) We obtain the exact asymptotic distribution of SIR when the number of slices is fixed and each slice contains the same number of observations. This way of computing SIR was already pointed out in Remark 4.2 in \cite{li1991} and it is the most common way to compute slicing estimators. The main issue here is to account for the effect of the randomness of the slices on the asymptotic distribution of SIR. To our knowledge, such results are new in the literature.

\item (see Section \ref{s32}) We introduce the class of integral based methods that approximate $E_c$ through the range of the matrices
\begin{align*}
 \int \mu (u)\mu(u) ^T d\nu(u),
\end{align*}
where $\mu$ stands for a stochastic process that lies in $E_c$, e.g. $c_\Psi$ or $C_{\Psi}$, and $\nu$ is a given probability measure. We show that this class includes interesting members such as SIR and CUME. Under mild condition, we prove the asymptotic normality and we provide a valid bootstrap procedure that indeed accounts for the randomness of the slices. Bootstrap is made through a weighting of the estimators that follows from  \cite{wellner1993b}, it includes for instance Efron's orginal bootstrap or the Bayesian bootstrap. Even for SIR or CUME, no such bootstrap was available in the literature. 

\item (see Section \ref{s33}) In the same spirit as the integral based methods of (ii), we develop several statistical tests of the type Cram\'er-von Mises: (a) a test of the dimension of a model, i.e. $d_0=d$ against $  d_0>d$, for some $1\leq d \leq p$, (b) following \cite{cook2004}, a test to assess the no effect of some user-selected sets of predictors, 
say $\eta^TZ$ with $\eta\in \R^{p\times (p-d)}$, 
(c) a test similar to (b), but now with $\eta$ estimated by a given dimension reduction method. The latter might lead us to evaluate whether a model is subject to the SIR pathology. The limiting laws of the considered statistics are fairly hard to estimate so that we provide a valid Bootstrap procedure in order to compute their quantiles. The choice of the bootstrap is crucial in testing since the bootstrap statistic needs to behave similarly as the statistic under $H_0$ even if $H_1$ is realized \cite{hall1991}. To implement the bootstrap, we follow ideas from \cite{portier2014} where a constraint bootstrap was developed for testing the rank of a matrix.

\end{enumerate}

A numerical analysis is given in Section \ref{s4}, in which we study the behaviour of the bootstrap approximation in significance testing.

\section{Preliminary results on empirical processes}\label{s2}

\subsection{Definitions}\label{s21}

Using the outer integral, the author Hoffman-Jorgensen has defined a notion of weak convergence of random sequences valued in a metric space \cite{hoffman1991}. This allows some elements of the considered sequences to be non-measurable 
provided that their limits are. We equip the space $l^{\infty}(\R)$ of bounded real functions defined on $\R$ with the supremum norm $\|\cdot\|_{\infty}$. We consider in this paper weak convergence of random elements in $l^\infty(\R)$ in the sense of Hoffman-Jorgensen.
Let $(Z_i,Y_i)_{1\leq i\leq  n}$ be an i.i.d. sequence of random elements lying in $\R^2$ with law $P$. We say that a class of measurable functions $\mathcal F\subset l^{\infty}(\R)$ is $P$-Donsker if 
\begin{align*}
n^{-1/2} \sum_{i=1}^n (f(Z_i,Y_i) -E f(Z_1,Y_1)) \text{ converges weakly in }l^{\infty}(\R).
\end{align*}
A complete study of the notion of weak convergence in metric spaces and Donsker classes is proposed in \cite{vandervaart1996}. The following lemma will be useful in the next.

\begin{lemma}\label{lemproduct}
Assume that $EZ^2$ is finite, then $ \{(z,y) \mapsto z \mathds{1}_{(-\infty,t]} (y), \ t\in \R\}$ 
is $P$-Donsker.
\end{lemma}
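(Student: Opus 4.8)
The plan is to check the two textbook sufficient conditions for a class of functions to be $P$-Donsker --- a square-integrable envelope and a finite uniform entropy integral, together with a pointwise measurability hypothesis --- and then to invoke the uniform-entropy central limit theorem (\cite{vandervaart1996}, Theorem 2.5.2). Set $\mathcal{F}=\{f_t:(z,y)\mapsto z\,\mathds{1}_{(-\infty,t]}(y),\ t\in\R\}$ and $\mathcal{G}=\{\mathds{1}_{(-\infty,t]}:t\in\R\}$, so that $\mathcal{F}$ is the image of $\mathcal{G}$ under multiplication by the fixed function $(z,y)\mapsto z$. An envelope for $\mathcal{F}$ is $F(z,y)=|z|$, and the hypothesis $EZ^2<\infty$ is exactly what gives $EF^2<\infty$; this is the only place the moment assumption enters.

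The substantive point is the control of the uniform covering numbers of $\mathcal{F}$. The half-lines $\{(-\infty,t]:t\in\R\}$ form a Vapnik--Chervonenkis class of sets, so by the standard polynomial bound on covering numbers of VC classes (\cite{vandervaart1996}, Section 2.6) there exist constants $A>0$ and $V<\infty$ with $N(\varepsilon,\mathcal{G},L^2(Q))\le(A/\varepsilon)^V$ for every $0<\varepsilon<1$ and every finitely supported probability measure $Q$ (the constant envelope of $\mathcal{G}$ being $1$). Given such a $Q$ on $\R^2$ with $\|F\|_{L^2(Q)}>0$, introduce the reweighted probability measure $Q'$ defined by $dQ'=z^2\,dQ/\int z^2\,dQ$. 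For $g,\tilde g\in\mathcal{G}$,
\[
\|z g-z\tilde g\|_{L^2(Q)}^2=\Big(\int z^2\,dQ\Big)\,\|g-\tilde g\|_{L^2(Q')}^2=\|F\|_{L^2(Q)}^2\,\|g-\tilde g\|_{L^2(Q')}^2 ,
\]
so any $\varepsilon$-net of $\mathcal{G}$ in $L^2(Q')$ becomes, after multiplication by $z$, an $\varepsilon\|F\|_{L^2(Q)}$-net of $\mathcal{F}$ in $L^2(Q)$. Hence $N(\varepsilon\|F\|_{L^2(Q)},\mathcal{F},L^2(Q))\le(A/\varepsilon)^V$ (the case $\|F\|_{L^2(Q)}=0$ being trivial, since then every element of $\mathcal{F}$ vanishes $Q$-a.e.), and therefore $\int_0^1\sup_Q\sqrt{\log N(\varepsilon\|F\|_{L^2(Q)},\mathcal{F},L^2(Q))}\,d\varepsilon<\infty$.

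Finally I would dispose of the measurability condition in the CLT: since $t\mapsto f_t(z,y)$ is right-continuous, the countable subfamily $\{f_t:t\in\mathbb{Q}\}$ approximates $\mathcal{F}$ pointwise, so $\mathcal{F}$ is a pointwise measurable class and the suprema that appear are measurable. Combining the square-integrable envelope, the finite uniform entropy integral, and pointwise measurability, \cite{vandervaart1996}, Theorem 2.5.2 yields that $\mathcal{F}$ is $P$-Donsker. Equivalently, one may note that $\mathcal{G}$ is $P$-Donsker by the classical theorem for the empirical cdf, that the singleton class $\{(z,y)\mapsto z\}$ is $P$-Donsker whenever $EZ^2<\infty$, and appeal to a preservation theorem for products of Donsker classes (\cite{vandervaart1996}, Example 2.10.8); the computation above is essentially a direct proof of that statement in the situation at hand. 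The only genuine obstacle is the unboundedness of the envelope $|z|$: a bracketing argument for uniformly bounded classes is unavailable, and it is precisely the reweighting trick above that reduces the entropy bound for $\mathcal{F}$ to the elementary VC bound for $\mathcal{G}$.
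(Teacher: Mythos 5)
Your proof is correct and in substance follows the same route as the paper: both arguments reduce the Donsker property of $\{(z,y)\mapsto z\mathds 1_{(-\infty,t]}(y)\}$ to the polynomial uniform covering numbers of the half-line indicators together with the square-integrable envelope $|z|$, and then invoke the uniform-entropy central limit theorem. The only difference is that the paper outsources the transfer of the entropy bound to the product class to Theorem 2.10.20 of van der Vaart and Wellner, whereas you reprove that step directly via the reweighting $dQ'=z^2\,dQ/\int z^2\,dQ$ (which is precisely the mechanism behind that theorem), and you additionally make the pointwise-measurability check explicit.
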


\begin{proof}
Let $\mathcal G=\{(x,y) \mapsto x \mathds{1}_{(-\infty,t]} (y), \ t\in \R\}$. First, it is well-known that $\mathcal F = \{ \mathds 1_{(-\infty,t]} ,\ t\in \R\}$ is $P$-Donsker (see for instance \cite{vandervaart1996}, Example 2.5.4, page 129). In particular, the covering number of $\mathcal F$ is such that
\begin{align}\label{covbound}
N(\epsilon, \mathcal F,L_2(P)) \leq \frac{2}{\epsilon^2}.
\end{align}
 Second, since functions of $\mathcal G$ have the form $g=\phi(id,f)$ for some $f\in \mathcal F$, where $\phi(x,y)=xy$ and ${id}$ stands for the identity function, we can write
\begin{align*}
\mathcal G = \phi( \{id\},\mathcal F  ).
\end{align*}
Let $f_1$ and $f_2$ be functions in $\mathcal F$, since we have
\begin{align*}
|\phi\circ(id,f_1)(x,y) -\phi\circ(id,f_2)(x,y)|^2 =x^2(f_1(y) -f_2(y))^2,
\end{align*}
we can apply Theorem 2.10.20 page 199 in \cite{vandervaart1996} (the condition above corresponds to (2.10.19), an envelope for $\mathcal F$ is the function equal to $1$ everywhere). In view of the bound for the covering number of $\mathcal F$ given in (\ref{covbound}), and the fact that the covering number of a single element is $1$, the uniform entropy condition is checked, making the class $\mathcal G$ a $P$-Donsker class.

\end{proof}

\subsection[When $\Phi$ is known]{Asymptotic behaviour when $\Phi$ is known}\label{s22}

From now on, $(Z_i,Y_i)_{1\leq i\leq  n}$ is an i.i.d. sequence of random elements lying in $\R^{p}\times \R$ and drawn from model (\ref{model}) with $\var(Z_1)=I$ and $EZ_1=0$. We denote by $|\cdot |_2$ the Euclidean norm. In what follows, elements of interest belong to the space $l^{\infty}([0,1])^p$ that is (with a slight abuse of notation) the space of bounded $\R^p$-valued functions defined on $[0,1]$. The empirical processes that estimate the processes $c_{\Phi}$ and $C_{\Phi}$ are defined as follows, for every $u\in [0,1] $, by
\begin{align*}
\w c_{\Phi} (u) = \frac 1 n \sum_{i=1}^n Z_i \mathds 1 _{\{Y_i\leq \Phi^{-}(u)\}}\qquad \t{and}\qquad \w C_{\Phi} (u) = \frac 1 n \sum_{i=1}^n (Z_i  Z_i ^T-I) \mathds 1 _{\{Y_i \leq  \Phi^{-}(u)\}} .
\end{align*}
We introduce the matrix
\begin{align*}
\gamma_1(u,v)=  \cov\big(Z\mathds 1 _{\{Y\leq  \Phi^{-}(u)\}},Z\mathds 1 _{\{Y\leq  \Phi^{-}(v)\}} \big).
\end{align*}

\begin{theorem}\label{th1}
Assume that $E[|Z_1|_2^2]$ is finite and $\Phi$ is a cdf, then $ \sqrt n (\w c_{\Phi} -c_{\Phi})$ converges weakly in $l^{\infty}([0,1])^p$ to a tight Gaussian process with zero-mean and covariance function $\gamma_1$.
\end{theorem}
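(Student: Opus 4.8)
The plan is to read $\sqrt n(\w c_{\Phi}-c_{\Phi})$ as an empirical process indexed by indicators of half-lines --- the very object handled in Lemma \ref{lemproduct} --- and then to push it forward to the index set $[0,1]$ through the deterministic reparametrisation $u\mapsto \Phi^-(u)$. Write, for $u\in[0,1]$, $\sqrt n(\w c_{\Phi}(u)-c_{\Phi}(u))=\mathbb G_n g_u$, where $g_u(z,y)=z\,\mathds 1_{\{y\le \Phi^-(u)\}}$ and $\mathbb G_n f = n^{-1/2}\sum_{i=1}^n\big(f(Z_i,Y_i)-Ef(Z_1,Y_1)\big)$. To cover the endpoints, where $\Phi^-(u)$ may equal $\pm\infty$, I adopt the conventions $\mathds 1_{\{y\le -\infty\}}=0$ and $\mathds 1_{\{y\le +\infty\}}=1$, so that $\Phi^-$ is a well-defined nondecreasing map from $[0,1]$ into $\bar{\R}:=[-\infty,+\infty]$ and the displayed identity remains valid at $u\in\{0,1\}$ (for instance $\w c_{\Phi}(0)=c_{\Phi}(0)=0$).

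Next I would establish the Donsker property of the relevant vector-valued class on the full index $\bar{\R}$. For each coordinate $j\le p$, Lemma \ref{lemproduct} applied to the scalar variable $e_j^T Z_1$ --- which has finite second moment because $E[|Z_1|_2^2]<\infty$ --- shows that $\mathcal F_j=\{(z,y)\mapsto (e_j^Tz)\,\mathds 1_{(-\infty,t]}(y):\ t\in\R\}$ is $P$-Donsker. Adjoining to each $\mathcal F_j$ the two square-integrable functions $0$ and $(z,y)\mapsto e_j^Tz$ (the $L_2(P)$-limits of $g\in\mathcal F_j$ as $t\to -\infty$ and $t\to+\infty$, i.e. the cases $t=\pm\infty$) preserves the Donsker property, and a finite union of $P$-Donsker classes is again $P$-Donsker (see \cite{vandervaart1996}). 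Hence the $\R^p$-valued empirical process $t\mapsto \mathbb G_n\big(Z\,\mathds 1_{(-\infty,t]}\big)$ converges weakly in $l^{\infty}(\bar{\R})^p$ to a tight, zero-mean Gaussian process $\mathbb G$ whose covariance at $(s,t)$ is $\cov\big(Z\mathds 1_{\{Y\le s\}},Z\mathds 1_{\{Y\le t\}}\big)$.

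Finally I would transfer this to $l^{\infty}([0,1])^p$. Let $\Lambda:l^{\infty}(\bar{\R})^p\to l^{\infty}([0,1])^p$ be the precomposition operator $(\Lambda H)(u)=H(\Phi^-(u))$. Although $\Phi^-$ itself need not be continuous, $\Lambda$ is linear and satisfies $\|\Lambda H\|_{\infty}\le\|H\|_{\infty}$, hence is continuous; so the continuous mapping theorem (valid for weak convergence in the Hoffman--Jorgensen sense, \cite{vandervaart1996}) yields that $u\mapsto \mathbb G_n g_u = \Lambda\big(\mathbb G_n(Z\mathds 1_{(-\infty,\cdot]})\big)$ converges weakly in $l^{\infty}([0,1])^p$ to $\Lambda\mathbb G$. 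The limit $\Lambda\mathbb G$ is a tight, zero-mean Gaussian process (a continuous linear image of one), with covariance $\cov\big(\mathbb G(\Phi^-(u)),\mathbb G(\Phi^-(v))\big)=\cov\big(Z\mathds 1_{\{Y\le \Phi^-(u)\}},Z\mathds 1_{\{Y\le \Phi^-(v)\}}\big)=\gamma_1(u,v)$, which is exactly the asserted limit.

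I do not expect a serious obstacle once Lemma \ref{lemproduct} is in hand: the argument is essentially bookkeeping. The two points that deserve a little care are the endpoints $u\in\{0,1\}$, handled by the conventions above, and --- more conceptually --- the legitimacy of passing from $l^{\infty}(\bar{\R})^p$ to $l^{\infty}([0,1])^p$ despite the jumps of $\Phi^-$. The latter is resolved by the observation that precomposition by an arbitrary map is automatically $\|\cdot\|_{\infty}$-continuous on a space of bounded functions, so no regularity of $\Phi$ is needed; this is the same reparametrisation device that, in the unknown-$\Phi$ case, will allow one to treat $Y$ as uniformly distributed.
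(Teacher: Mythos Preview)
Your argument is correct and follows essentially the same route as the paper: invoke Lemma~\ref{lemproduct} coordinatewise to get a Donsker class, then identify the finite-dimensional limits. The paper's proof is terser --- it simply notes that each coordinate is an empirical process indexed by (a subset of) the Donsker class of Lemma~\ref{lemproduct}, so tightness is immediate and the finite-dimensional laws come from the multivariate CLT --- whereas you make the reparametrisation $u\mapsto\Phi^-(u)$ explicit via the continuous mapping theorem and take extra care with the endpoints $u\in\{0,1\}$; these are useful clarifications but not a genuinely different strategy.
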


\begin{proof}
Each coordinate of the process $ \sqrt n (\w c_{\Phi} -c_{\Phi})$ can be written as $\sqrt n (\P_n-P) g$ where by Lemma \ref{lemproduct}, $g$ lies in a Donsker class. Because tightness is equivalent to tightness of each coordinates, it implies that the process $\sqrt n (\w c_{\Phi} -c_{\Phi}) $ is tight. The limiting process is then given by the limiting distribution of the finite dimensional laws obtained by the multivariate central limit theorem.
\end{proof}

We now obtain the weak convergence of $\sqrt n (\w C_{\Phi}-C_{\Phi})$. To state this we define the operator $\t{vec}$ that vectorizes a matrix by stacking its columns, and we introduce the matrix
\begin{align*}
\Gamma_1(u,v)=  \cov(\t{vec}(ZZ^T-I) \mathds 1 _{\{Y\leq  \Phi^-(u)\}},\t{vec}(ZZ^T-I) \mathds 1 _{\{Y\leq  \Phi^-(v)\}}).
\end{align*}

\addtocounter{corollary}{1}
\begin{corollary}\label{cor1}
Assume that $E[|Z_1|_2^4]$ is finite and $\Phi$ is a cdf, then $ \sqrt n (\w C_{\Phi} -C_{\Phi}) $ converges weakly in $l^{\infty}([0,1])^{(p\times p)}$ to a tight Gaussian process with zero-mean and covariance function $\Gamma_1$.
\end{corollary}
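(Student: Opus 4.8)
The plan is to mimic exactly the proof of Theorem~\ref{th1}, reducing the statement to a Donsker property for the relevant class of functions and then invoking the multivariate central limit theorem for the finite-dimensional marginals. Concretely, the coordinate $(j,k)$ of the process $\sqrt n(\w C_{\Phi}-C_{\Phi})$ can be written as $\sqrt n(\P_n-P)g$ with $g(z,y)=(z_jz_k-\mathds 1_{\{j=k\}})\mathds 1_{(-\infty,\Phi^-(u)]}(y)$, so it suffices to show that the class
\begin{align*}
\mathcal H=\{(z,y)\mapsto (z_jz_k-\mathds 1_{\{j=k\}})\,\mathds 1_{(-\infty,t]}(y),\ t\in\R\}
\end{align*}
is $P$-Donsker for each fixed pair $(j,k)$. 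Once each scalar coordinate is a Donsker-class empirical process, each is tight in $l^{\infty}([0,1])$, and since tightness of a vector-valued process is equivalent to tightness of each of its finitely many coordinates, the process $\sqrt n(\w C_{\Phi}-C_{\Phi})$ is tight in $l^{\infty}([0,1])^{(p\times p)}$. The limit is then pinned down by the convergence of the finite-dimensional distributions, which follows from the ordinary multivariate CLT applied to the vectors $\big(\t{vec}(Z_iZ_i^T-I)\mathds 1_{\{Y_i\leq \Phi^-(u_1)\}},\dots,\t{vec}(Z_iZ_i^T-I)\mathds 1_{\{Y_i\leq \Phi^-(u_m)\}}\big)$; these have finite second moments precisely because $E[|Z_1|_2^4]<\infty$, and the covariance of the Gaussian limit is read off as $\Gamma_1$.

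First I would establish the Donsker property of $\mathcal H$ by the same route as Lemma~\ref{lemproduct}. Writing $\phi(x,y)=xy$ and noting that every member of $\mathcal H$ has the form $\phi\circ(h,f)$ with $h(z,y)=z_jz_k-\mathds 1_{\{j=k\}}$ a single fixed function and $f\in\mathcal F=\{\mathds 1_{(-\infty,t]},\ t\in\R\}$, we have $\mathcal H=\phi(\{h\},\mathcal F)$. The elementary inequality
\begin{align*}
|\phi\circ(h,f_1)(z,y)-\phi\circ(h,f_2)(z,y)|^2=h(z,y)^2\,(f_1(y)-f_2(y))^2
\end{align*}
is the hypothesis (2.10.19) of Theorem 2.10.20 in \cite{vandervaart1996}, with $h$ playing the role of the fixed factor and the constant function $1$ serving as an envelope for $\mathcal F$. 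The uniform entropy condition then reduces to the bound $N(\epsilon,\mathcal F,L_2(P))\leq 2/\epsilon^2$ from (\ref{covbound}) together with the trivial entropy bound $1$ for the singleton $\{h\}$; the required integrability of the envelope of $\mathcal H$, namely $E[h(Z_1,Y_1)^2]<\infty$, holds because $E[(z_jz_k)^2]\leq E[|Z_1|_2^4]<\infty$ by Cauchy--Schwarz. Hence $\mathcal H$ is $P$-Donsker.

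The only genuinely new ingredient compared with Theorem~\ref{th1} is the strengthened moment assumption: for the first moment process one needs $E[|Z_1|_2^2]<\infty$ to control the envelope $z\cdot 1$, whereas here the envelope involves $z_jz_k$, whose square is of order $|z|_2^4$, which is exactly why $E[|Z_1|_2^4]<\infty$ is assumed. I do not anticipate a serious obstacle anywhere: the argument is a direct transcription of the proof of Theorem~\ref{th1} coordinate by coordinate, with the bookkeeping that there are now $p^2$ coordinates rather than $p$ and that the fixed multiplier in the product class is $z_jz_k-\mathds 1_{\{j=k\}}$ instead of $z_j$. If anything, the mild subtlety worth a sentence is to make explicit that ``tightness coordinatewise implies tightness of the vector process'' and that the finite-dimensional limits combine into a single tight Gaussian process whose covariance structure across all coordinate pairs and all pairs $(u,v)$ is precisely the matrix-valued function $\Gamma_1$.
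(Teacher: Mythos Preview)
Your proposal is correct and is essentially the paper's own argument, just unpacked in more detail: the paper's proof is the single line ``apply Theorem~\ref{th1} with $\t{vec}(ZZ^T-I)$ in place of $Z$,'' which amounts to exactly the coordinatewise Donsker check you carry out, since Theorem~\ref{th1} only used that each coordinate of $Z$ has finite second moment and Lemma~\ref{lemproduct}. Your version makes the substitution explicit (fixed multiplier $z_jz_k-\mathds 1_{\{j=k\}}$ instead of $z_j$, envelope bounded via $E|Z_1|_2^4$), but there is no genuine methodological difference.
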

\begin{proof}
We apply Theorem \ref{th1} with $\text{vec}(ZZ^T-I)$ in place of $Z$.
\end{proof}

\subsection[When $\Phi$ is unknown]{Asymptotic behaviour when $\Phi$ is the cdf of $Y$}\label{s23}

We focus on the case where $\Phi=F$ the unknown distribution function of $Y$. Since
\begin{align*}
c_F(u) = E(Z\mathds 1_{\{Y\leq  F^-(u)\}})\qquad \t{and}\qquad C_F(u) = E((ZZ^T-I)\mathds 1_{\{Y\leq  F^-(u)\}}),
\end{align*}
this choice ``uniformizes'' the variable $Y$ and, as a consequence, vanishes the effect of the distribution of $Y$ on the estimation. Clearly, we can not follow the same path as previously since the estimation of $F$ will certainly affect the limiting process.  We introduce the empirical cdf
\begin{align*}
\w F(t) = n^{-1} \sum_{i=1}^n \mathds 1 _{\{Y_i\leq  t\}},
\end{align*}
defined for each $t\in \R$. Our estimators are plugged-in estimators, i.e. $c_F$ and $C_F$ are respectively estimated by $\w c_{\w F_{\t{}}}$ and $\w C_{\w F_{\t{}}}$ given by
\begin{align*}
\w c_{\w F_{\t{}}} (u) = \frac 1 n \sum_{i=1}^n Z_i \mathds 1 _{\{Y_i\leq \w F_{\t{}} ^-(u)\}}\qquad \t{and}\qquad \w C_{\w F_{\t{}}} (u) = \frac 1 n \sum_{i=1}^n (Z_i  Z_i ^T-I) \mathds 1 _{\{Y_i\leq  \w F_{\t{}} ^-(u)\}}.
\end{align*}

\begin{remark}\label{remranktransform}\normalfont
 An important point is that when $F$ is continuous, without loss of generality, the variables $Y_i$'s can be assumed to be uniformly distributed on $[0,1]$. Equivalently the limiting function $F$ can be assumed to be the identity function on $[0,1]$. To show this, first note that because $\w F$ is a c\`ad-l\`ag function that has $1/n$-jumps at each $Y_i$, it is easy to show that for any $u\in [0,1]$,
\begin{align}\label{formulasetequivalence}
\{Y_i\leq  \w F^-(u) \} \Leftrightarrow \{\w F (Y_i)< u+n^{-1} \}.
\end{align} 
Then we have that 
 \begin{align*}
 \w c_{{\w F}_{\t{}}} (u) = \frac 1 n \sum_{i=1}^n Z_i \mathds 1 _{\{\w F(Y_i) < u +n^{-1}\}}
 \end{align*}
and a similar expression holds for $ \w C_{{\w F}}$. This makes the previous estimators being sums over the $Z_i$'s and the rank statistics $\w F(Y_i)'s$. Second note that the rank statistics based on the $Y_i's$ are equal to the rank statistics based on the uniformized variables $F(Y_i)$'s. As a consequence of this two facts, the processes $\w c_{\w F_{\t{}}}$ and $\w C_{\w F_{\t{}}} $ can be constructed identically with the samples $(Z_i,Y_i)_{1\leq i\leq n}$ and $(Z_i,F(Y_i))_{1\leq i\leq n}$ . From now on in the proofs, since $F(Y_i)$ is uniformly distributed on $[0,1]$ (because of the continuity of $F$), we can assume without any loss of generality that the variable $Y$ is uniformly distributed.
\end{remark}

To compute the asymptotic distribution, since $c_F = c_{id}\circ F^{-}$,  we use the Delta method in metric spaces stated in Theorem 3.9.4 of \cite{vandervaart1996}. This approach has been employed for instance in \cite{vandervaart1996}, page 389, and in \cite{fermanian2004}, both in the context of the weak convergence of the empirical copula process. More precisely, we follow this scheme:

\begin{enumerate}[i)]
\item \label{step1} Use Lemma \ref{lemproduct} to obtain the weak convergence of the process $(s,t)\mapsto n^{1/2} (\w F(s)-F(s),$ $\w c_{id}(t)-c_{id}(t) )$.
\item \label{step2} Apply the Delta method with the map $(F,c_{id}) \mapsto c_{id}\circ F^{-}$.
\end{enumerate}
Because the latter map involves the quantile transformation that is not Hadamard differentiable everywhere (see Lemma 3.9.23 in \cite{vandervaart1996}), the fact that $\w F$ can be assumed to converge to the cdf of a uniform distribution (by Remark \ref{remranktransform}) is a key step in our proof. We define the function $\gamma_2:[0,1]^2 \r \R^{(p+1)\times (p+1)}$ given by
\begin{align*}
\gamma_2(u,v)= \cov\left(\begin{pmatrix}
1\\Z
\end{pmatrix} 1 _{\{Y\leq F^-(u)\}},\begin{pmatrix}
1\\Z
\end{pmatrix}\mathds 1 _{\{Y \leq F^-(v)\}} \right),
\end{align*}
and $\gamma_3:[0,1]^2 \r \R^{p\times p}$ by
\begin{align*}
\gamma_3(u,v) = (-\partial c_{F}(u),  I) \ \gamma_2(u,v)\ (- \partial c_{F}(v),  I) ^T,
\end{align*}
where $\partial c_{F}$ stands for the derivative of the map $u\mapsto c_F(u)$.


\begin{theorem}\label{th2}
Assume that $E[|Z_1|_2^2]$ is finite and $F$ is continuous. Then if $c_{F}$ is continuously differentiable, $\sqrt n(\w c_{\w F}- c_{F})$ converges weakly in $l^{\infty}([0,1])^p$ to a tight Gaussian process with zero-mean and covariance function $\gamma_3$.\end{theorem}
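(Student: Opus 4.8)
The plan is to realize $\sqrt n(\w c_{\w F}-c_F)$ as the image, under a Hadamard-differentiable map, of a jointly weakly convergent empirical process, and then to invoke the functional delta method (Theorem 3.9.4 in \cite{vandervaart1996}), along the two steps \ref{step1}--\ref{step2} announced above. By Remark \ref{remranktransform} I would assume throughout that $Y$ is uniform on $[0,1]$, so that $F=\mathrm{id}$ on $[0,1]$ and $\w F$ is the uniform empirical distribution function. The key algebraic identity is that, with $\w c_{\mathrm{id}}(t)=n^{-1}\sum_i Z_i\mathds 1_{\{Y_i\le t\}}$, one has $\w c_{\w F}(u)=\w c_{\mathrm{id}}(\w F^-(u))$ and likewise $c_F=c_{\mathrm{id}}\circ F^-$; hence everything reduces to the behaviour of the map $\phi:(A,B)\mapsto B\circ A^-$ near $(\mathrm{id},c_{\mathrm{id}})$.

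For step \ref{step1}, each coordinate of the process $(s,t)\mapsto n^{1/2}\big(\w F(s)-F(s),\ \w c_{\mathrm{id}}(t)-c_{\mathrm{id}}(t)\big)$ has the form $n^{1/2}(\P_n-P)g$ with $g$ running through $\{\mathds 1_{(-\infty,t]}:t\in\R\}\cup\{z\mathds 1_{(-\infty,t]}:t\in\R\}$. The first class is the classical Donsker class of half-lines, the second is Donsker by Lemma \ref{lemproduct}, and a finite union of Donsker classes is Donsker, so the pooled class is $P$-Donsker. As in the proof of Theorem \ref{th1}, tightness of the vector-valued process follows from tightness of its coordinates, and restricting the index to $[0,1]$ the joint limit is the tight zero-mean Gaussian process with covariance $\gamma_2$. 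Since $F$ is continuous and $E|Z_1|_2^2<\infty$, the $L_2(P)$ modulus of the indexing functions tends to $0$ as $s\to t$ by dominated convergence, so the sample paths of this limit lie in $C([0,1])^{1+p}$; this is the tangent set needed below.

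For step \ref{step2}, I would write $\phi$ as the composition of $(A,B)\mapsto(A^-,B)$ followed by $(Q,B)\mapsto B\circ Q$. The quantile map is Hadamard differentiable at $A=\mathrm{id}$ tangentially to $C([0,1])$ with derivative $\alpha\mapsto-\alpha$; this is exactly Lemma 3.9.23 in \cite{vandervaart1996}, and it applies precisely because Remark \ref{remranktransform} has arranged the limit $F$ to be the uniform cdf, which is continuously differentiable with a strictly positive density. Since $c_{\mathrm{id}}=c_F$ is continuously differentiable, the composition map $(Q,B)\mapsto B\circ Q$ is Hadamard differentiable at $(\mathrm{id},c_{\mathrm{id}})$ with derivative $(\beta_Q,\beta_B)\mapsto \beta_B+\partial c_F\cdot\beta_Q$ (the same argument used for the empirical copula process in \cite{vandervaart1996}, page~389, and in \cite{fermanian2004}). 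Chaining the two, $\phi$ is Hadamard differentiable at $(\mathrm{id},c_{\mathrm{id}})$ tangentially to $C([0,1])^{1+p}$, with derivative sending $(\alpha,\beta)$ to $u\mapsto\beta(u)-\partial c_F(u)\,\alpha(u)$, i.e. the linear map represented by $(-\partial c_F(\cdot),I)$.

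Theorem 3.9.4 in \cite{vandervaart1996} then yields that $\sqrt n(\w c_{\w F}-c_F)=\sqrt n\big(\phi(\w F,\w c_{\mathrm{id}})-\phi(\mathrm{id},c_{\mathrm{id}})\big)$ converges weakly in $l^{\infty}([0,1])^p$ to the image of the joint Gaussian limit $(\mathbb G_F,\mathbb G_c)$ under this derivative, namely the tight zero-mean Gaussian process $u\mapsto\mathbb G_c(u)-\partial c_F(u)\,\mathbb G_F(u)$; computing its covariance from $\gamma_2$ gives exactly $\gamma_3(u,v)=(-\partial c_F(u),I)\,\gamma_2(u,v)\,(-\partial c_F(v),I)^T$. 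The main obstacle is the failure of the quantile map to be Hadamard differentiable at a general $F$: the whole argument hinges on the reduction of Remark \ref{remranktransform} to the uniform case, and on carefully tracking the tangency conditions so that the support of the joint Gaussian limit remains inside the subspace $C([0,1])^{1+p}$ on which $\phi$ is differentiable (and the $C^1$ hypothesis on $c_F$ is what is needed to differentiate the composition with a continuous perturbation).
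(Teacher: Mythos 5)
Your proposal is correct and follows essentially the same route as the paper: the rank-transform reduction of Remark \ref{remranktransform} to uniform $Y$, joint weak convergence of $(\w F,\w c_{\mathrm{id}})$ via Lemma \ref{lemproduct}, the decomposition of $(A,B)\mapsto B\circ A^-$ into the quantile map (Lemma 3.9.23) and the composition map, and the functional delta method (Theorem 3.9.4) yielding the derivative $(-\partial c_F,I)$ and hence the covariance $\gamma_3$. Your explicit verification that the limit has continuous sample paths (so that the tangency conditions of the Hadamard differentiability lemmas are met) is a point the paper leaves implicit, but otherwise the two arguments coincide.
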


\begin{proof}
 Without loss of generality, we can put $F(Y_i)$ in place of $Y_i$ (see Remark \ref{remranktransform}). We denote by $id_{[0,1]}$ the cdf of the uniform distribution. By applying Lemma \ref{lemproduct}, the process $\sqrt n (\w G-G)$, with $\w G(s,t) =(\w F(s),\w c_{F}(t))$ and $ G(s,t) =( id_{[0,1]} (s),c_{F}(t))$, converges weakly in $l^{\infty} (\R)\times l^{\infty} ([0,1])^p$ to a tight Gaussian element. Now since
\begin{align}
\w c_{\w F}=\psi(\w G) \qquad\text{and}\qquad c_{F}=\psi(G)  ,
\end{align}
where $\psi: \mathcal F\times l^{\infty} ([0,1])^p \r l^{\infty} ([0,1])^p$, $\mathcal F$ being the space of cdf with support included in $[0,1]$, is given by
\begin{align}\label{decompmap}
\psi:(f_1,f_2)\mapsto(f_1^-,f_2) \mapsto f_2 \circ f_1^{-},
\end{align}
we can apply Theorem 3.9.4, page 374 in \cite{vandervaart1996} which basically says that $\sqrt n (\psi(\w G)-\psi( G))$ is $P$-Donsker provided that the map $\psi$ is Hadamard differentiable. In what follows, we first show that $\psi$ is Hadamard differentiable, and then we compute the asymptotic variance. Using Lemma 3.9.23, assertion (ii), page 386 in \cite{vandervaart1996}, the first map of Equation (\ref{decompmap}) reduced to $f\mapsto f^-$ is Hadamard differentiable at the function $id_{[0,1]}$ tangentially to $C[0,1]$. Moreover its derivative at $id_{[0,1]}$, in the direction $h_1$ is given by $  -h_1 $. Since $c_{F} $ is Fr\'echet differentiable, by Lemma 3.9.27, page 388 in \cite{vandervaart1996}, the second map in Equation (\ref{decompmap}) is Hadamard differentiable at $(id_{[0,1]}^-,c_{F})$, tangentially to $C[0,1]$ (because continuous functions are uniformly continuous on compacts). Its derivative at $(id_{[0,1]}^-,c_{F})$, in the direction $(h_1,h_2)$, is given by $ h_1\times  \partial c_{F}  +h_2$. By the chain rule, the function $\psi$ is Hadamard differentiable at the point $(id_{[0,1]},  c_{F})$ tangentially to $C[0,1]$. At this point, in the direction $(h_1,h_2)$, its derivative is given by $ -h_1\times \partial c_{F}  + h_2$. Hence, the limiting process has the representation
\begin{align*}
u\mapsto  w_1 (u)- \partial c_{F} (u) \times B(u)=(-\partial c_{F}(u),  I) \begin{pmatrix}B(u) \\w_1(u)\end{pmatrix},
\end{align*}
where $(B,w_1)$ is the Gaussian limit of $\w H:u\mapsto \sqrt n (\w G-G)\circ (u,u)$. Its covariance function is computed by applying the central limit theorem that gives
\begin{align*}
(\w H (u_1),\ldots ,\w H (u_K))  \overset{\t{d}}{\lr} ((B,w_1)(u_1),\ldots,(B,w_1)(u_K)),
\end{align*}
where $\vecv((B,w_1)(u_1),...,(B,w_1)(u_K))$ is a Gaussian vector with mean $0$ and covariance matrix having the block decomposition $(\gamma_2(u_k,u_l) )_{1\leq k,l\leq K}$.


\end{proof}

To obtain a similar result about the order $2$ moments process, we define the function $\Gamma_2:[0,1]^2 \r \R^{(p+1)\times (p+1)}$ by
\begin{align*}
\Gamma_2(u,v)= \cov\left(\begin{pmatrix}
1\\ \vecv(ZZ^T-I)
\end{pmatrix} 1 _{\{Y\leq F^{-1}(u)\}},\begin{pmatrix}
1\\ \vecv(ZZ^T-I)
\end{pmatrix}\mathds 1 _{\{Y\leq F^{-1}(v)\}} \right),
\end{align*}
and $\Gamma_3:[0,1]^2 \r \R^{p\times p}$ by
\begin{align*}
\Gamma_3(u,v) = (-\partial \t{vec}(C_{F})(u), I) \ \Gamma_2(u,v)\ (-\partial\t{vec}(C_{F})(v),  I) ^T.
\end{align*}
where $\partial \t{vec}(C_{F})(u)$ stands for the derivative of the map $u\mapsto \t{vec}(C_{F})(u)$.

\begin{corollary}\label{cor2}
Assume that $E[|Z_1|_2^4]$ is finite and $F$ is continuous. Then if $\t{vec}(C_{F})$ is continuously differentiable, $\sqrt n(\w C_{\w F}- C_{F})$ converges weakly in $l^{\infty}([0,1])^{(p\times p)}$ to a tight Gaussian process with zero-mean and covariance function $\Gamma_3$.
\end{corollary}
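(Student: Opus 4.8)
The plan is to reduce Corollary \ref{cor2} to Theorem \ref{th2} exactly as Corollary \ref{cor1} was reduced to Theorem \ref{th1}: substitute the $\R^{p^2}$-valued vector $W:=\vecv(ZZ^T-I)$ for $Z$ throughout. Writing $W_i=\vecv(Z_iZ_i^T-I)$, one has $\vecv(\w C_{\w F}(u))=n^{-1}\sum_{i=1}^n W_i\mathds 1_{\{Y_i\le \w F^-(u)\}}$, which is precisely the first-moment estimator $\w c_{\w F}$ built from the sample $(W_i,Y_i)_{1\le i\le n}$, and likewise $\vecv(C_F(\cdot))=E(W\mathds 1_{\{Y\le F^-(\cdot)\}})$. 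The moment hypothesis $E[|Z_1|_2^4]<\infty$ is exactly what guarantees $E[|W_1|_2^2]<\infty$, which is the condition under which Lemma \ref{lemproduct} applies to each coordinate of $W$.

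First I would invoke Remark \ref{remranktransform} to assume without loss of generality that $Y$ is uniform on $[0,1]$, so that the limiting cdf is $id_{[0,1]}$; this is the point that makes the quantile map amenable to the delta method. Then, as in step \ref{step1} of the scheme preceding Theorem \ref{th2}, applying Lemma \ref{lemproduct} coordinatewise (to the class $\{\mathds 1_{(-\infty,t]}:t\in\R\}$ and to the $p^2$ classes $\{(w,y)\mapsto w_j\mathds 1_{(-\infty,t]}(y)\}$) together with the multivariate central limit theorem for the finite-dimensional marginals, the joint process $(s,t)\mapsto\sqrt n(\w F(s)-F(s),\,\w C_{id}(t)-C_{id}(t))$ converges weakly in $l^\infty(\R)\times l^\infty([0,1])^{(p\times p)}$ to a tight Gaussian element. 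The coordinates of $W$ are linearly dependent, but this is irrelevant, since vector-valued Donsker and CLT statements do not require independence of coordinates.

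Next I would apply the functional delta method, Theorem 3.9.4 in \cite{vandervaart1996}, to the map $\psi:(f_1,f_2)\mapsto f_2\circ f_1^-$ decomposed as in (\ref{decompmap}). The Hadamard differentiability argument is word for word the one in the proof of Theorem \ref{th2}: Lemma 3.9.23(ii) of \cite{vandervaart1996} gives that $f\mapsto f^-$ is Hadamard differentiable at $id_{[0,1]}$ tangentially to $C[0,1]$ with derivative $h_1\mapsto -h_1$, and Lemma 3.9.27 gives that composition is Hadamard differentiable at $(id_{[0,1]}^-,\vecv(C_F))$ — this is exactly where the hypothesis that $\vecv(C_F)$ is continuously differentiable is used, in place of the continuous differentiability of $c_F$. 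By the chain rule $\psi$ is Hadamard differentiable at $(id_{[0,1]},\vecv(C_F))$ with derivative $(h_1,h_2)\mapsto -h_1\,\partial\vecv(C_F)+h_2$, so the limit of $\sqrt n(\vecv(\w C_{\w F})-\vecv(C_F))$ is the process $u\mapsto(-\partial\vecv(C_F)(u),\,I)\,(B(u),w(u))^T$, where $(B,w)$ is the Gaussian limit of $u\mapsto\sqrt n(\w F,\w C_{id})\circ(u,u)$, whose covariance is $\Gamma_2(u,v)$ by the multivariate CLT. Hence the limiting covariance is $\Gamma_3$, and devectorizing yields the stated weak convergence in $l^\infty([0,1])^{(p\times p)}$.

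As for the main obstacle: there is essentially no new difficulty here. The only thing requiring care is bookkeeping — checking that the fourth-moment hypothesis is precisely matched to the second-moment hypothesis needed by Lemma \ref{lemproduct} and Theorem \ref{th2} after the substitution of $\vecv(ZZ^T-I)$ for $Z$, and that $\partial\vecv(C_F)$, whose existence and continuity is assumed, plays the role of $\partial c_F$. Everything else transfers verbatim from the proof of Theorem \ref{th2}.
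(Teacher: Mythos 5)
Your proposal is correct and is exactly the paper's argument: the paper's one-line proof reduces the corollary to the $\Phi=\w F$ theorem by substituting $\vecv(ZZ^T-I)$ for $Z$ (the paper cites Theorem \ref{th1}, evidently a typo for Theorem \ref{th2}), and you have simply written out that reduction in full, with the correct matching of the fourth-moment hypothesis on $Z$ to the second-moment hypothesis on the substituted vector. Nothing further is needed.
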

\begin{proof}
We apply Theorem \ref{th1} with $\text{vec}(ZZ^T-I)$ in place of $Z$.
\end{proof}

\subsection{The Bootstrap}\label{s24}

In light of the limiting covariance processes given in the previous section, in particular because of the presence of $\partial c_{F}$ and $\partial \t{vec}(C_{F})$ but also the possibly high-dimensionality of these processes, the asymptotic distributions are fairly hard to estimate. As a consequence, for making inference, it seems necessary to develop a bootstrap strategy. Efron \cite{efron1979} introduced the original bootstrap that consists in a sampling with equi-probability and replacement of the original sample. In \cite{wellner1993b}, the authors considered a more general re-sampling plan based on weights $w_{i,n}$, $i=1,\ldots,n$ that verified
\begin{enumerate}[(\t{B}1)]
\item\label{Bweightfirst} the random sequence $(w_{i,n})_{1\leq i\leq n}$ is exchangeable, i.e. for every permutation $(\pi_1,\ldots, \pi_n)$ of $(1,\ldots ,n )$, $(w_{i,n})_{1\leq i\leq n}$ has the same law as $(w_{\pi_i,n})_{1\leq i\leq n}$,
\item denote by $S_n$ the survival function of $w_{1,n}$, we have
\begin{align*}
\sup_{n\geq 1} \int S_{n}(u)^{1/2} du<+\infty \qquad \t{and} \qquad \lim_{A\r +\infty} \limsup_{n\r +\infty} \sup_{t\geq A}\ t^{2} S_{n}(t)=0.
\end{align*}
\item \label{Bweightend} $w_{i,n}\geq 0$, $\sum_{i=1}^n w_{i,n}=n$, $n^{-1}\sum_{i=1}^n (w_{i,n}-1)^2 \overset{P}{\r} 1$.
\end{enumerate}
Examples of such weights, are given in \cite{wellner1993b}. Now we define the bootstrap processes
\begin{align*}
\w F^*(t)  &= n^{-1}\sum_{i=1}^n w_{i,n}\mathds 1 _{\{Y_i\leq t\}},\\
\w c_{\Phi} ^*(u) &= \frac 1 n \sum_{i=1}^n  w_{i,n} Z_i \mathds 1 _{\{Y_i\leq \Phi_{\t{}}^-(u)\}},
\end{align*}
for every $t\in \R$ and every $u\in [0,1]$. The bootstrap of $\w c_{\Phi}$ (resp. $\w c_{\w F}$) is made by $\w c_{\Phi} ^*$ (resp. $\w c_{\w F^*}^*$). The following theorem basically says that the bootstrap in probability (in the sense of \cite{wellner1993b}) works.

\begin{theorem}\label{thboot}
Under (B\ref{Bweightfirst}) to (B\ref{Bweightend}), assume that $E[|Z_1|_2^2]$ is finite and $\Phi$ is a cdf, then conditionally on the sample,

\noindent \hspace{1cm} $ n^{1/2}(\w c^* _{\Phi}- \w c_{\Phi})$ has the same weak limit as $n^{1/2}(\w c_{\Phi}- c_{ \Phi})$, in probability.

\noindent If moreover $F$ is continuous and $c_F$ is continuously differentiable, then conditionally on the sample,

\noindent \hspace{1cm}$n^{1/2}(\w c^* _{\w F^*}- \w c_{\w F})$ has the same weak limit as $n^{1/2}(\w c_{\w F}- c_{ F})$, in probability.\end{theorem}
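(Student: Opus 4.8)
The plan is to obtain the first assertion directly from the exchangeably weighted bootstrap theorem of \cite{wellner1993b} for $P$-Donsker classes, and then to derive the second assertion by feeding this into a bootstrap version of the functional delta method, following the same scheme as in the proof of Theorem \ref{th2}.

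For the first assertion, I would start from the identity, valid for every $u\in[0,1]$,
\begin{align*}
n^{1/2}\big(\w c^*_\Phi(u)-\w c_\Phi(u)\big)=n^{-1/2}\sum_{i=1}^n (w_{i,n}-1)\,Z_i\,\mathds 1_{\{Y_i\leq\Phi^-(u)\}},
\end{align*}
which exhibits the left-hand side, coordinatewise, as the weighted bootstrap empirical process indexed by the functions $(z,y)\mapsto z\,\mathds 1_{\{y\leq\Phi^-(u)\}}$ of the class $\{(z,y)\mapsto z\,\mathds 1_{(-\infty,t]}(y):t\in\R\}$, which is $P$-Donsker by Lemma \ref{lemproduct}. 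Since the weights satisfy (B\ref{Bweightfirst})--(B\ref{Bweightend}), the main theorem of \cite{wellner1993b} gives that, conditionally on the sample and in probability, each coordinate converges weakly in $l^\infty([0,1])$ to the same tight Gaussian limit as the corresponding coordinate of $n^{1/2}(\w c_\Phi-c_\Phi)$. As in the proof of Theorem \ref{th1}, tightness of the $\R^p$-valued process is equivalent to tightness of each coordinate, and its finite-dimensional laws converge jointly by the conditional multivariate central limit theorem for exchangeable weights (also in \cite{wellner1993b}); this settles the statement for $\w c^*_\Phi$.

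For the second assertion I would first reduce, exactly as in Remark \ref{remranktransform}, to the case where $Y$ is uniform on $[0,1]$ and $F=id_{[0,1]}$; this reduction stays valid in the bootstrap setting because the events $\{Y_i\leq\w F^{*-}(u)\}$ are unchanged when $Y$ is replaced by $F(Y)$, only the monotonicity of $\w F^*$ and of the continuous increasing map $F$ entering, not the sizes of the jumps of $\w F^*$. Writing $\w G=(\w F,\w c_{id})$, $\w G^*=(\w F^*,\w c^*_{id})$ and $G=(id_{[0,1]},c_F)$, the processes $\w G$ and $\w G^*$ are the empirical and the weighted bootstrap processes indexed by the class $\{(z,y)\mapsto(1,z^T)^T\mathds 1_{(-\infty,t]}(y):t\in\R\}$, which is $P$-Donsker by Lemma \ref{lemproduct} as in the proof of Theorem \ref{th2}. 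By that proof, $n^{1/2}(\w G-G)$ converges weakly to a tight Gaussian element in $l^\infty(\R)\times l^\infty([0,1])^p$, and by the first assertion --- whose argument applies verbatim with $(1,Z^T)^T$ in place of $Z$ --- $n^{1/2}(\w G^*-\w G)$ converges, conditionally on the sample and in probability, to the same Gaussian element.

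The last step transports both convergences through the map $\psi$ of Equation (\ref{decompmap}), for which $\w c_{\w F}=\psi(\w G)$, $c_F=\psi(G)$ and $\w c^*_{\w F^*}=\psi(\w G^*)$. The proof of Theorem \ref{th2} establishes that $\psi$ is Hadamard differentiable at $(id_{[0,1]},c_F)$ tangentially to $C[0,1]$ (in the first coordinate), with derivative $(h_1,h_2)\mapsto-h_1\times\partial c_F+h_2$, and the limiting Gaussian element concentrates on this tangent set since its first coordinate is the uniform Brownian bridge, which has continuous paths almost surely. Applying the bootstrap functional delta method in its ``in probability'' form (Theorem 3.9.11 in \cite{vandervaart1996}) then yields that $n^{1/2}(\w c^*_{\w F^*}-\w c_{\w F})=n^{1/2}(\psi(\w G^*)-\psi(\w G))$ converges, conditionally on the sample and in probability, to the image of that Gaussian element under $\psi'_{(id_{[0,1]},c_F)}$, which by the computation in the proof of Theorem \ref{th2} is precisely the tight zero-mean Gaussian process with covariance $\gamma_3$, i.e.\ the weak limit of $n^{1/2}(\w c_{\w F}-c_F)$. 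I expect the genuinely delicate points to lie entirely in the second assertion: checking that the rank reduction of Remark \ref{remranktransform} is unaffected by the random weights, and that $\psi$ is Hadamard differentiable exactly at the point $(id_{[0,1]},c_F)$ to which $\w G$ and, conditionally, $\w G^*$ both converge, so that the ``in probability'' delta method applies with one and the same derivative; the first assertion should reduce to a near-immediate appeal to \cite{wellner1993b} once Lemma \ref{lemproduct} is in hand.
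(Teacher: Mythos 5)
Your proposal is correct and follows essentially the same route as the paper: Lemma \ref{lemproduct} combined with Theorem 2.1 of \cite{wellner1993b} for the first assertion, then the rank-transform reduction of Remark \ref{remranktransform} applied to the bootstrap process, joint weak convergence of $(\w F^*,\w c^*_{F})$ around $(\w F,\w c_{F})$, and the bootstrap delta method (Theorem 3.9.11 of \cite{vandervaart1996}) with the Hadamard derivative already computed in the proof of Theorem \ref{th2}. The only cosmetic difference is that you justify the invariance of the events $\{Y_i\leq\w F^{*-}(u)\}$ under $Y\mapsto F(Y)$ via monotonicity of the ranks, whereas the paper records the explicit equivalence $\{Y_i\leq \w F^{*-}(u)\}\Leftrightarrow\{\w F^*(Y_i)<u+n^{-1}w_{i,n}\}$; both lead to the same conclusion.
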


\begin{proof}

The first statement is a direct consequence of Lemma \ref{lemproduct} and Theorem 2.1 in \cite{wellner1993b}. For the second statement, we first apply the trick detailed in Remark \ref{remranktransform} to the bootstrap estimator.  Indeed it is easy to see that $\w c_{\w F^*}^*$ can be constructed as well from the sample $(Z_i,F(Y_i))_{1\leq i\leq n}$, so that the weak limit of $\w F^*$ can be assumed to be $id_{[0,1]}$. This is due to the equivalence between
\begin{align*}
\{Y_i\leq  \w F^{*-}(u) \} \Leftrightarrow \{\w F^* (Y_i)< u+n^{-1} w_{i,n} \},
\end{align*} 
for any $u\in [0,1]$, plus the fact that the bootstrap ranks $\w F^* (Y_i)$'s are the same as the uniformized bootstrap ranks (i.e. based on the $F(Y_i)$'s rather than the $Y_i$'s). Then by applying again Lemma \ref{lemproduct} with Theorem 2.1 in Paestgrad and Wellner, the process $\sqrt n (\w G^*-\w G)$, with $ \w G^*(s,t)= (\w F^*(s),\w c_{F}^*(t))$, has the same limiting distribution as $\sqrt n (\w G-G)$ (defined in the proof of Theorem \ref{th2}), that is a tight Gaussian element of $l^{\infty} (\R)\times l^{\infty} ([0,1])^p$. Then we can invoke the Delta-method for the bootstrap stated as Theorem 3.9.11, page 378, in \cite{vandervaart1996}.
\end{proof}

Similarly, we define $\w C_{\Phi} ^*$ by
\begin{align*}
\w C_{\Phi} ^*(u) &= \frac 1 n \sum_{i=1}^n w_{i,n} (Z_i  Z_i ^T-I) \mathds 1 _{\{Y_i\leq \Phi^-(u)\}},
\end{align*}
for every $u\in [0,1]$, and we obtain this corollary.
\begin{corollary}\label{corboot}
Under (B\ref{Bweightfirst}) to (B\ref{Bweightend}), assume that $E[|Z_1|_2^4]$ is finite and $\Phi$ is a cdf, then conditionally on the sample,

\noindent \hspace{1cm} $\sqrt n(\w C^* _{\Phi}- \w C_{\Phi})$ has the same weak limit as $\sqrt n(\w C_{\Phi}- C_{ \Phi})$, in probability.

\noindent If moreover $F$ is continuous and $\t{vec}(C_{F})$ is continuously differentiable, then conditionally on the sample,

\noindent \hspace{1cm}$\sqrt n(\w C^* _{\w F^*}- \w C_{\w F})$ has the same weak limit as $\sqrt n(\w C_{\w F}- C_{ F})$, in probability.\end{corollary}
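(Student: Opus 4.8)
The plan is to obtain Corollary \ref{corboot} from Theorem \ref{thboot} exactly as Corollary \ref{cor1} follows from Theorem \ref{th1} and Corollary \ref{cor2} follows from Theorem \ref{th2}: by replacing the vector $Z$ everywhere by the vectorized matrix $\vecv(ZZ^T-I)\in\R^{p^2}$. The one preliminary point is the moment requirement. Each coordinate of $\vecv(Z_1Z_1^T-I)$ is $Z_{1j}Z_{1k}-\delta_{jk}$, so $E[|\vecv(Z_1Z_1^T-I)|_2^2]\leq 2\sum_{j,k}(E[Z_{1j}^2Z_{1k}^2]+1)\leq 2(E[|Z_1|_2^4]+p^2)<\infty$ under the hypothesis $E[|Z_1|_2^4]<\infty$. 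Hence Lemma \ref{lemproduct}, applied with the coordinates of $\vecv(ZZ^T-I)$ in place of $z$, shows that the class $\{(w,y)\mapsto w\,\mathds 1_{(-\infty,t]}(y),\ t\in\R\}$ is $P$-Donsker for each such coordinate.

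For the first assertion I would reproduce the first part of the proof of Theorem \ref{thboot}. Each coordinate of $\sqrt n(\w C^*_\Phi-\w C_\Phi)$ is $\sqrt n$ times a weighted bootstrap empirical process $n^{-1}\sum_i(w_{i,n}-1)g(Z_i,Y_i)$ evaluated at a function $g$ in the Donsker class identified above. Theorem 2.1 in \cite{wellner1993b} then yields, conditionally on the sample and in probability, the weak convergence of $\sqrt n(\w C^*_\Phi-\w C_\Phi)$ in $l^\infty([0,1])^{(p\times p)}$ to the same tight zero-mean Gaussian process — the one with covariance $\Gamma_1$ — that was obtained in Corollary \ref{cor1} for $\sqrt n(\w C_\Phi-C_\Phi)$. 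Tightness is again reduced to tightness of coordinates.

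For the second assertion I would mimic the second part of the proof of Theorem \ref{thboot}. First, the rank-transform trick of Remark \ref{remranktransform} has to be applied to the bootstrap estimator: using the equivalence $\{Y_i\leq\w F^{*-}(u)\}\Leftrightarrow\{\w F^*(Y_i)<u+n^{-1}w_{i,n}\}$ together with the invariance of the bootstrap ranks under the uniformizing map $y\mapsto F(y)$, one may assume $Y$ uniform so that the weak limit of $\w F^*$ is $id_{[0,1]}$. Then Lemma \ref{lemproduct} (for $\vecv(ZZ^T-I)$) combined with Theorem 2.1 in \cite{wellner1993b} gives that $\sqrt n(\w G^*-\w G)$, with $\w G^*(s,t)=(\w F^*(s),\w C_F^*(t))$, has conditionally and in probability the same tight Gaussian limit in $l^\infty(\R)\times l^\infty([0,1])^{(p\times p)}$ as $\sqrt n(\w G-G)$ from the proof of Theorem \ref{th2}. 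Finally, since $\vecv(C_F)$ is assumed continuously differentiable, the map $\psi$ of Equation (\ref{decompmap}) — now acting on pairs of a cdf supported in $[0,1]$ and an $\R^{p\times p}$-valued function — is Hadamard differentiable at $(id_{[0,1]},C_F)$ tangentially to $C[0,1]$, with derivative $(h_1,h_2)\mapsto-h_1\times\partial\vecv(C_F)+h_2$, by Lemmas 3.9.23(ii) and 3.9.27 of \cite{vandervaart1996} and the chain rule, just as for $c_F$. The delta-method for the bootstrap, Theorem 3.9.11 page 378 in \cite{vandervaart1996}, then transfers the conditional weak convergence to $\sqrt n(\w C^*_{\w F^*}-\w C_{\w F})$ and identifies its limit through $\Gamma_2$ and hence $\Gamma_3$, matching the limit of $\sqrt n(\w C_{\w F}-C_F)$ given in Corollary \ref{cor2}.

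The hard part is not new: it is the same subtlety already present in Theorems \ref{th2} and \ref{thboot}, namely that the composition-with-quantile map fails to be Hadamard differentiable at a general cdf, so that the rank-transform reduction must genuinely be carried out for the bootstrapped empirical cdf $\w F^*$ (whose jumps have the random sizes $n^{-1}w_{i,n}$) in order to put the differentiability point at $id_{[0,1]}$. Once that reduction is in place, the rest is a verbatim reprise of the first-moment arguments with $\vecv(ZZ^T-I)$ in place of $Z$ and the fourth-moment condition in place of the second-moment one.
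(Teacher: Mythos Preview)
Your proposal is correct and is exactly the approach the paper takes: the paper's proof is the one-liner ``apply Theorem \ref{thboot} with $\vecv(ZZ^T-I)$ in place of $Z$'' (the text actually reads ``Theorem \ref{th1}'', an evident typo repeated from the earlier corollaries), and you have simply written out that substitution in full, including the moment check that $E[|Z_1|_2^4]<\infty$ guarantees $E[|\vecv(Z_1Z_1^T-I)|_2^2]<\infty$.
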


\begin{proof}
We apply Theorem \ref{th1} with $\text{vec}(ZZ^T-I)$ in place of $Z$.
\end{proof}

\section[Inverse regression]{Application to inverse regression}\label{s3}

In this section we are based on the results of the previous section in order to (i) raise some new points about the asymptotics of SIR, (ii) develop a unified framework for inverse regression and (iii) study new bootstrap testing procedure. The variables $Z_i$'s are assumed to be standardized in order to clarify the statements of the results. In practice we must account for the error induced by estimations of the mean and the variance (see Section \ref{s4} for more details).

\subsection{Revisiting sliced inverse regression}\label{s31}

Sliced inverse regression \cite{li1991} is based on the vectors
\begin{align*}
n^{-1}\sum_{i=1}^n Z_i\mathds 1_{\{Y_i\in I(h)\}},
\end{align*}
where $I(h) $, for $h=1,\ldots,H$ is a partition of the range of the $Y_i$'s. In practice, to diminish the chance of having a poor estimation of such vectors, it is convenient to keep the same number of observations within each slice (this was already pointed-out in Remark 4.2 by \cite{li1991} and this is how SIR is usually run). Consequently each member $I(h)$ of the partition is random because it depends on the $Y_i$'s. Meanwhile when describing the asymptotic behaviour, many authors have ignored this additional source of randomness (see among others \cite{cook2004}, \cite{cook2005} or \cite{portier2013}). In what follows, we show that the randomness of the partition $I(h)$ can not be neglected since we find that it participates in the asymptotic variance of the estimation. Our approach can work because the slicing $I(h)$ is expressed in a simple way with the help of the rank statistics $\w F(Y_i)$'s. Hence we shall apply in the next, Theorem \ref{th2} and Corollary \ref{cor2}. For brevity, we focus on SIR, but the same analysis can be extended to second order slicing methods such as for instance, SAVE and DR.


Consider a multi-slice procedure with $H$ slices. Denote by $\lceil \alpha \rceil$ the smallest integer greater than or equal to $\alpha$. A reasonable way to dispatch the data among the slices should be with $\lceil n/H \rceil$ observations in the first slice, $\lceil 2n/H\rceil-\lceil n/H\rceil$ in the second,...,$n-\lceil n(H-1)/H\rceil$ in the last slice. Note that as soon as $n$ is a multiple of $H$, each slice contains exactly the same number of observations $n/H$. The SIR estimator is the subspace generated by
\begin{align*}
( \w c_{\w F}(u_1),\w c_{\w F}(u_2)-\w c_{\w F}(u_1),\ldots,\w c_{\w F}(u_{H})-\w c_{\w F}(u_{H-1})),
\end{align*}
with $u_h =  h/H$, and the corresponding estimation with nonrandom slices is the span of the matrix
\begin{align*}
 ( \w c_{ F}(u_1),\w c_{F}(u_2)-\w c_{F}(u_1),\ldots,\w c_{ F}(u_{H})-\w c_{ F}(u_{H-1})).
\end{align*}
Invoking Theorems \ref{th1} and \ref{th2}, for any $h\in\{1,\ldots, H\}$, because the sequences $\w c_{\w F}(u_h)$ and $\w c_{F}(u_h)$ have a different asymptotic distribution, the latter matrices neither. To highlight differences in the behaviour of $\w c_{\w F }$ and $\w c_{F }$, we consider the following tool model
\begin{align}\label{toolmodel}
Y=X_1+.1e,
\end{align}
where $(X,e)\in \R^{5} $ follows a standard normal distribution. In order to keep clear our statements and conclusions, we focus on the first slice of SIR in which the number of observations $\lceil nu \rceil $ varies with $u$ from $1/2$ to $0$. There are two different ways to compute it: 
\begin{itemize}
\item Order the responses $Y_i$'s, create a slice containing the first $\lceil nu \rceil $ observations, compute the mean over the $X_i$'s within the slice. This gives the vector  $\w c_{1}=\w c_{\w F}(u)$.
\item Create a slice according to $Y_i\leq F^-(u)$ (the slice is independent of the observations $Y_i$'s), compute the mean over the $X_i$'s within the slice. This gives the vector $\w c_{2}=\w c_{F}(u)$.
\end{itemize}
By means of simulations, we evaluate the first coordinate of the latter quantities $1000$ times. The resulting boxplots, for different values of $n$ are reported in Figure \ref{fig1}.

\begin{figure}\centering
\includegraphics[width=7.5cm,height=6cm]{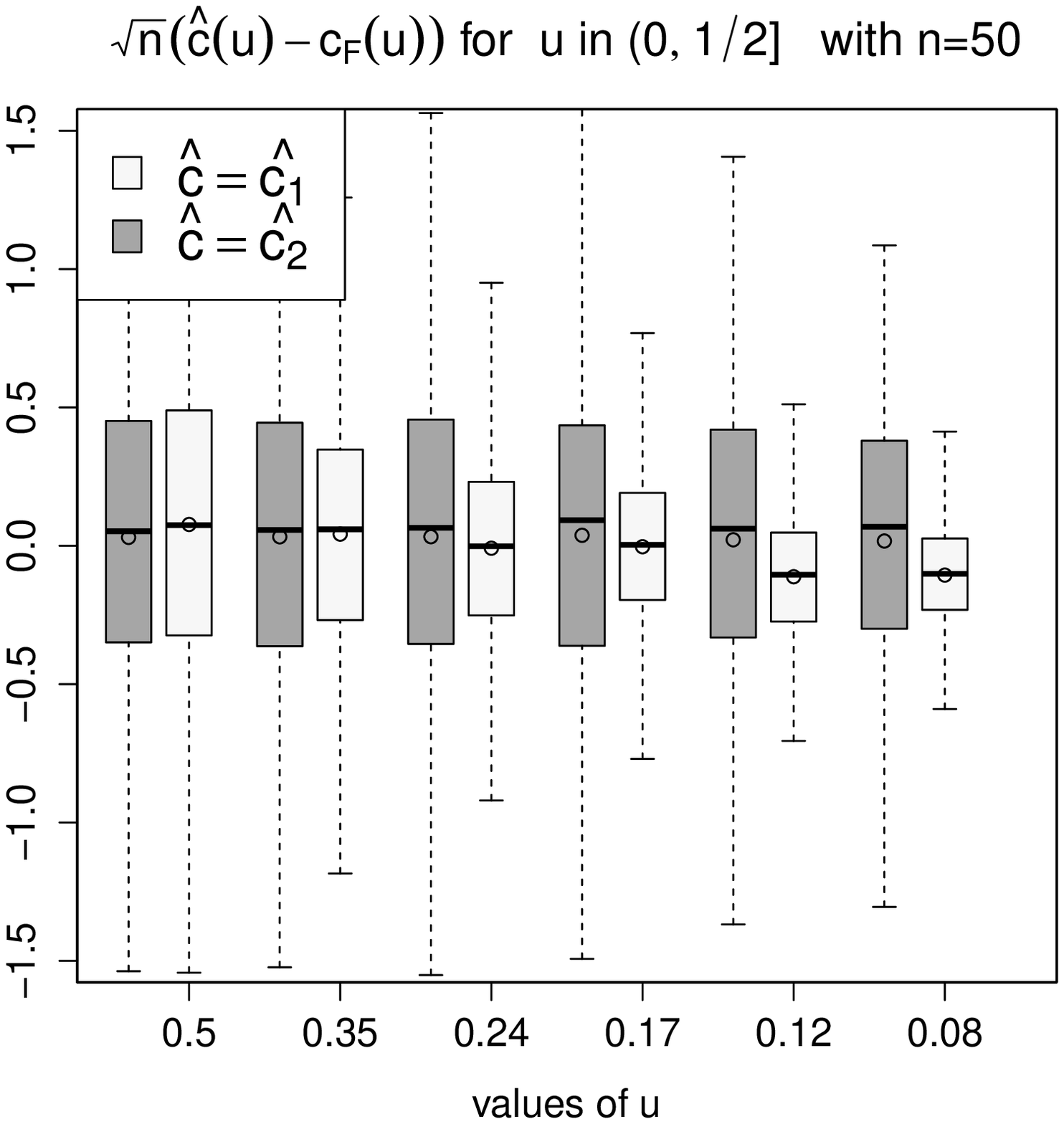} 
\includegraphics[width=7.5cm,height=6cm]{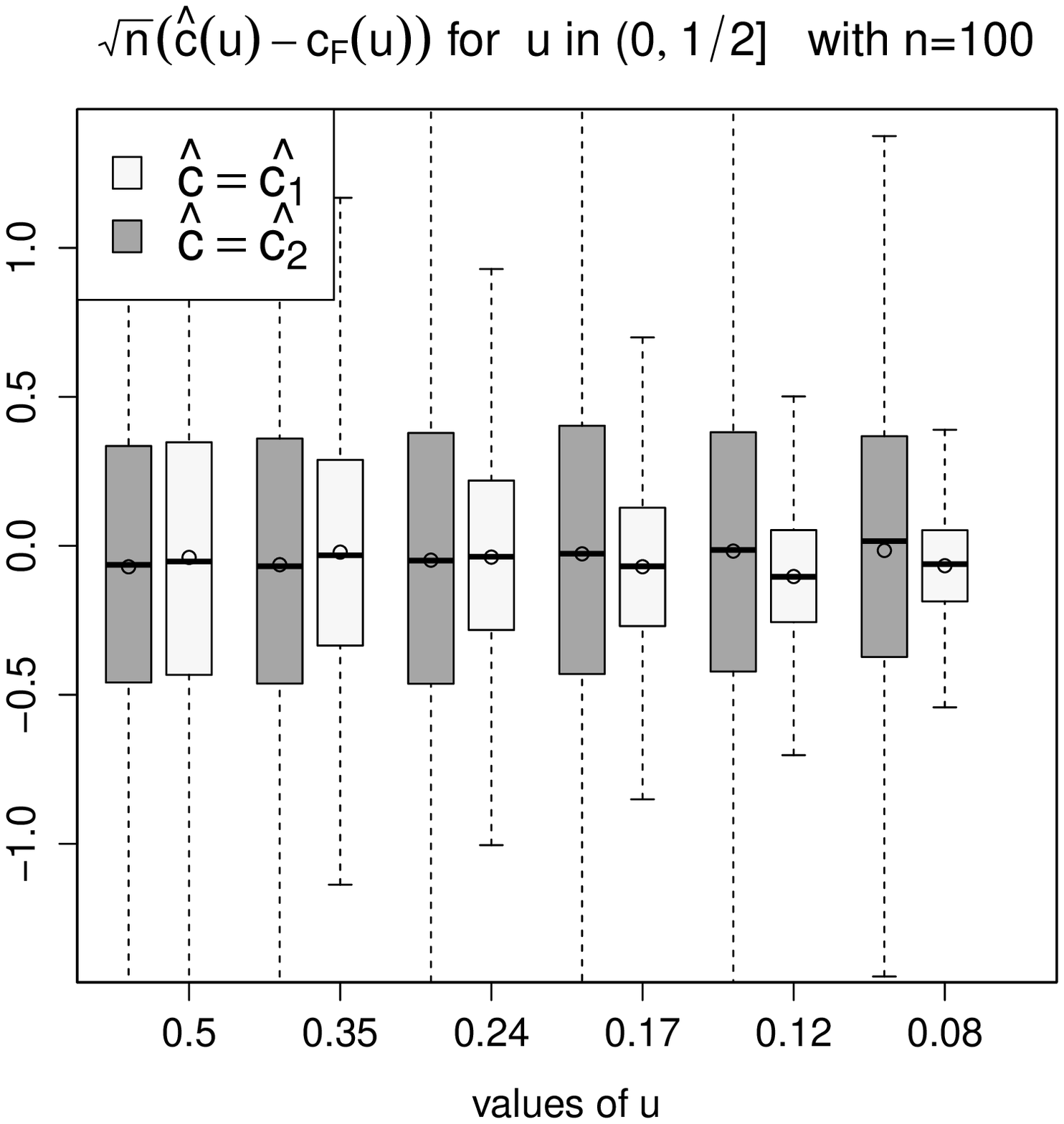} 
\includegraphics[width=7.5cm,height=6cm]{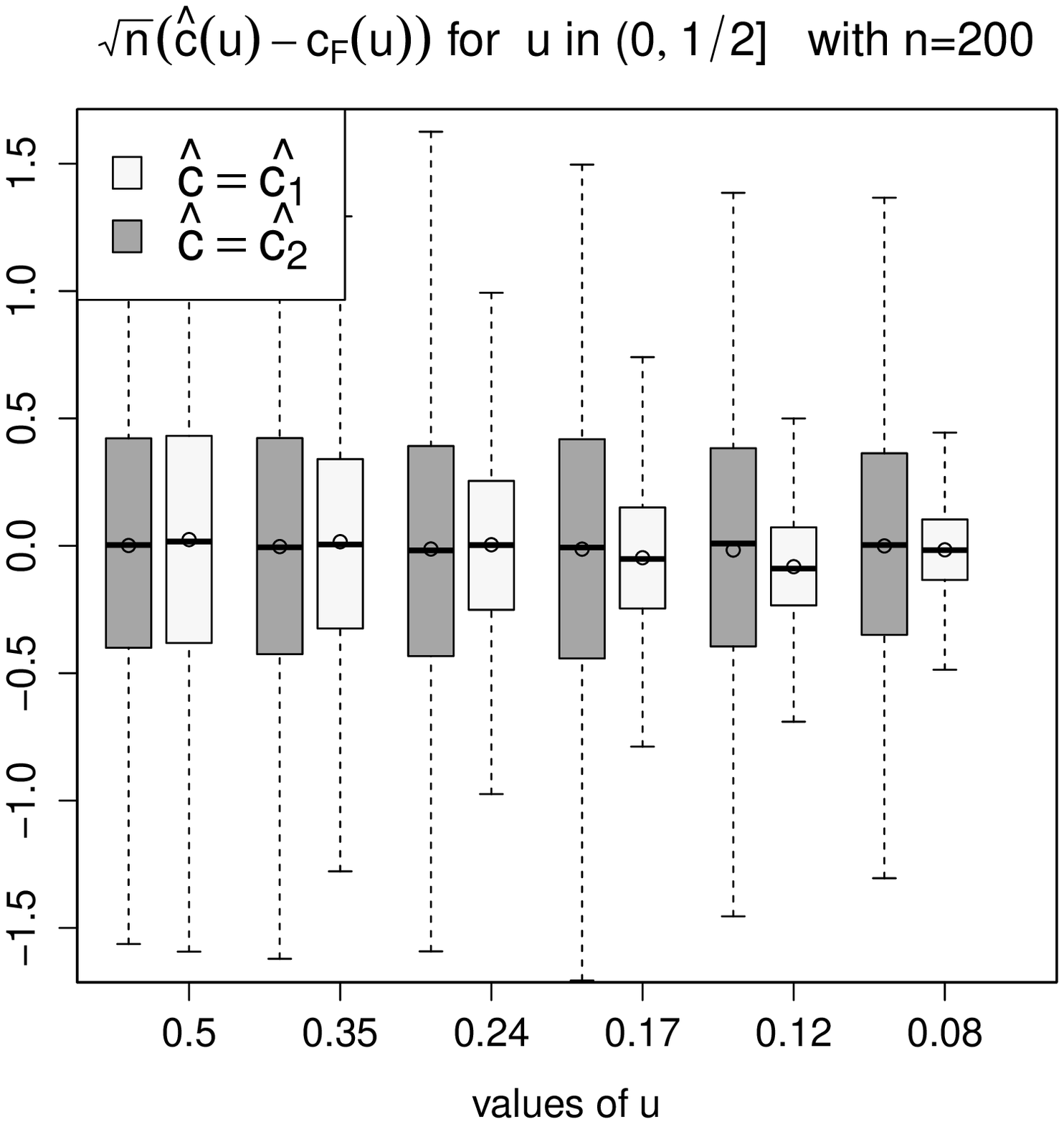} 
\includegraphics[width=7.5cm,height=6cm]{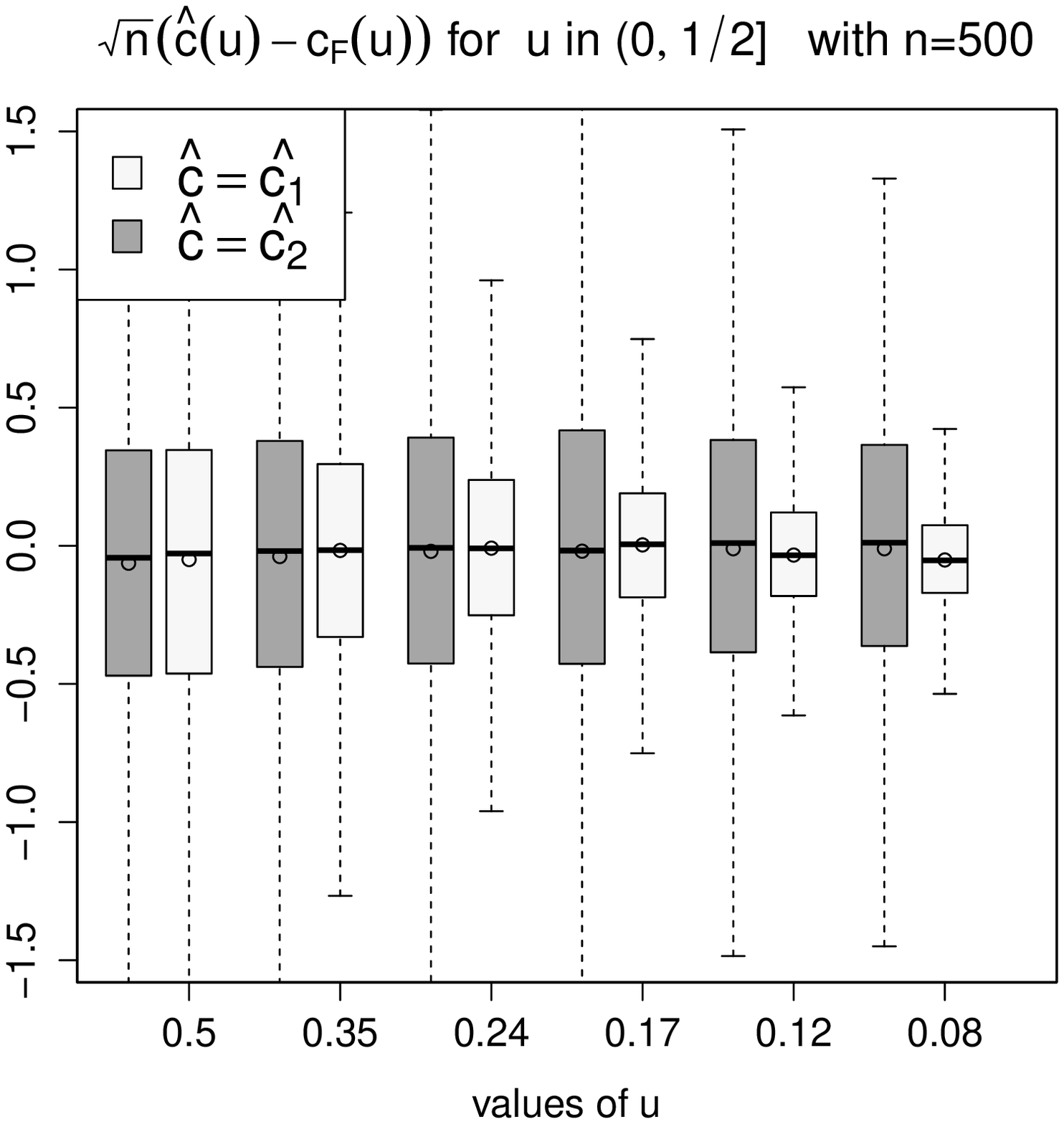} 
\caption{Boxplots of the law of the first coordinate of $\sqrt n (\w c_{k}(u) -c_F(u))$, for $k=1,2$, based on 1000 replications.}
\label{fig1}
\end{figure}

Starting from $u=1/2$ (meaning that the observations have been cut in half), where both variances are the same, we see that, as $u$ decreases, the dispersion of $\w c_{2}(u)$ becomes larger, whereas it is clearly more stable for $\w c_{1}(u)$. Note also that whereas $\w c_{2}(u)$ is unbiased, $\w c_{1}(u)$ suffers from a slight bias in small sample sizes. This sheds light on two things: (i) the limiting distribution of $\w c_{1}(u)$ and $\w c_{2}(u)$ are different and one should care about that as soon as inference is of matter, (ii) for this generic example, $\w c_{1}(u)$ is more efficient than $\w c_{2}(u)$, highlighting that it is more accurate to have a control on the number of observations within the slices. To the best of our knowledge, the latter question is still open although this is only theoretical because the matrix based on $c_2$ is not even computable (unless we know the law of $Y$).


\begin{remark}\label{remregressionexample}\normalfont 
In light of Theorems \ref{th1} and \ref{th2}, it is the function $\partial c_F $  that determines whether the asymptotic is affected by the randomness of the slices. In the case of an additive regression model $Y=g(\beta_0^TX)+e$ with $e\indep X$, we find that
\begin{align*}
c_F (u) = E[ZF_{e} (F^{-}(u) -g(\beta_0^TX))],
\end{align*}
where $F_e$ is the cdf of $e$. At $u=0$ and $u=1$ this quantity equals $0$, then under the assumption of Theorem \ref{th2}, by the Rolle's theorem there exists at least one $v\in (0,1)$ such that $\partial c_F(v)=0$. As a consequence, the asymptotic distributions of $\w c_{\w F}(v)$ and $\w c_{ F}(v)$ are the same. Nevertheless this certainly will not happen at each slice boundary $u_h$, as it is highlighted in Figure \ref{fig1} for Model (\ref{toolmodel}), for which $v=1/2$.
\end{remark}

\begin{remark}[cumulative slicing estimation]\label{remcomputationintegral1}\normalfont
In \cite{zhu2010}, the authors consider spaces generated by the integral
\begin{align*}
\int_0^1  \w c_{\w F}(u) \w c_{\w F}(u)^T d u.
\end{align*}
Contrary to most of the existing methods, any slicing is no longer necessary. They focus on a large $p$ small $n$ context and give a limit theorem by borrowing a $U$-statistic approach, that is rather different than our empirical process approach. Their simulation results highlight that CUME is competitive with SIR and performs even better in several situations.

\end{remark}

\subsection{Integral approach: a unified framework}\label{s32}

In order to consider in the next a broad class of different methods, and in particular to include SIR and CUME, it is useful to introduce the matrix
\begin{align*}
  A_{\nu}(\w \mu) = \int  \w \mu (u) \w \mu (u)^T d \nu (u),
\end{align*}
where $\w \mu $ belongs to the space $l^{\infty}([0,1])^{(p\times q)}$ with $q\geq 1$ and $\nu$ is a probability measure on $[0,1]$. In order to estimate $E_c$, the process $\w \mu $ shall be a combination of processes studied in the Section \ref{s2}, namely $\w c_{\w F}$ and $\w C_{\w F}$. As soon as (LC) and (CCV) are realized, the corresponding limit of  $A_{\nu}(\w \mu) $ generates a subspace of $E_c$. Hence the estimation of $E_c$ follows from an eigendecomposition of $A_{\nu}(\w \mu)$ by taking the eigenvectors associated to the $d_0$ largest eigenvalues as the estimated basis of $E_c$.

\paragraph{Order $1$ moments based methods.}
One easily sees that taking $\w \mu $ equal to $\w c_{\w F}(u)$ and $\nu$ equal to the uniform distribution on $[0,1]$, leads to CUME. Now let $\pi\in[0,1]$ and define the quantities
\begin{align*}
&\w m(u,\pi ) = \w c_{\w F}(u+\pi/2)-\w c_{\w F}(u-\pi/2),\\
&m(u,\pi ) =  c_{F}(u+\pi/2)- c_{F}(u-\pi/2),
\end{align*}
the SIR estimators can be expressed as the space generated by
\begin{align*}
 \sum_{h=1}^H \w m((2h-1)/2H,1/H ) \w m((2h-1)/(2H),1/H )^T.
\end{align*}
This is a direct consequence of the definition of SIR given in the previous section. As a result, SIR with $H$ slices bolongs to our framework. It corresponds to the matrix $ \w A_{\nu}(\mu) $ when $\w \mu$ equal to $\w m( \cdot ,1/H )$ and $\nu$ is the cdf of a discrete uniform random variable over the set $\{(2h-1)/(2H),\ h=1,\ldots, H\}$. Our framework permits also a slight modification of SIR, based on the same process $\w m( \cdot,\pi )$ but with $\nu$ being the cdf of a continuous (rather than discrete) uniform random variable on $[0,1]$. 

\paragraph{Order $2$ moments based methods.} 
As it is well-known in the literature, SIR and CUME are inconsistent in estimating directions that present a symmetric relationship with the variable $Y$. To remedy this problem, one can rather consider order $2$ moments of the predictor as in SAVE or DR. Within our framework, this means computing $A_\nu(\w \mu) $ with $\w\mu$ equal to $ \w C_{\w F}$ (cumulative version) or $\w M (u,\pi)=\w C_{\w F}(u+\pi/2)-\w C_{\w F}(u-\pi/2)$ (slicing version).

\bigskip

Denoting by $\mu$ the limit in probability of $\w \mu$, we obtain the weak convergence of $n^{1/2} ( A_{\nu}(\w \mu)-A_{\nu}(\mu))$ by following these steps:
\begin{enumerate}[(A)]
\item \label{ga}Weak convergence of the process $ n^{1/2} (\w \mu-  \mu)$ in $l^{\infty}([0,1])^{(p\times q)}$.
\item \label{gb} Application of the continuous mapping theorem to extend the convergence to some integral maps.
\end{enumerate}
In Section \ref{s2}, we have focused on the first step, so that the proof of the following theorem essentially consists in showing the second step. For brevity we formally state our results for the order $1$ moments based methods, the extension to order $2$ moments based methods being straightforward (see Remark \ref{remarkcomputationintegral2}).

\begin{theorem}\label{thconvmatrix}
Assume that $E[|Z_1|_2^2]$ is finite, $F$ is continuous and $c_{F}$ is continuously differentiable, then 

\noindent (i) if $\w \mu= \w c_{\w F}$, $\mu= c_{ F}$, 
\begin{align*}
n^{1/2}( A_{\nu}(\w \mu)-A_{\nu}(\mu)) \text{\quad has Gaussian limit\quad }\int 
 w_3(u)c_{ F}^T+c_{ F}w_3(u)^T d \nu(u),
\end{align*}

\noindent (ii) if $\pi\in [0,1]$, $\w \mu= \w m(\cdot,\pi)$, $ \mu=  m(\cdot ,\pi)$, 
\begin{align*}
n^{1/2}( A_{\nu}(\w \mu)-A_{\nu}(\mu)) \text{\quad has Gaussian limit\quad }\int 
 w_4(u)m(u,\pi)^T+m(u,\pi)w_4(u)^T d \nu(u),
\end{align*}
where $w_4(u)=w_3(u+\pi/2)-w_3(u-\pi/2)$ and $w_3$ is a Gaussian process with covariance $\gamma_3$.
\end{theorem}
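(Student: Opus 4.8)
The plan is to follow the two-step scheme (A)--(B) announced just before the statement. For step (A), the weak convergence of $n^{1/2}(\w\mu - \mu)$ in $l^\infty([0,1])^{(p\times q)}$, I would simply invoke the results already proved in Section \ref{s2}. For part (i), $\w\mu = \w c_{\w F}$ and $\mu = c_F$, so Theorem \ref{th2} gives that $n^{1/2}(\w c_{\w F} - c_F)$ converges weakly in $l^\infty([0,1])^p$ to the tight zero-mean Gaussian process $w_3$ with covariance $\gamma_3$. For part (ii), $\w m(\cdot,\pi)$ is the image of $\w c_{\w F}$ under the bounded linear map $f \mapsto \big(u \mapsto f(u+\pi/2) - f(u-\pi/2)\big)$ from $l^\infty([0,1])^p$ to itself (with the convention that $f$ is extended by its boundary values, or equivalently working on a slightly enlarged interval); by the continuous mapping theorem, $n^{1/2}(\w m(\cdot,\pi) - m(\cdot,\pi))$ converges weakly to $w_4(u) = w_3(u+\pi/2) - w_3(u-\pi/2)$, which is again a tight Gaussian process.

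For step (B), the key observation is that the map
\begin{align*}
\Theta : l^\infty([0,1])^{(p\times q)} \r \R^{p\times p}, \qquad \mu \mapsto A_\nu(\mu) = \int \mu(u)\mu(u)^T\, d\nu(u),
\end{align*}
is continuous with respect to the supremum norm on bounded sets, and moreover it is ``differentiable along the relevant direction'' in the following sense: writing $\w\mu = \mu + n^{-1/2} G_n$ with $G_n = n^{1/2}(\w\mu - \mu)$, one has
\begin{align*}
n^{1/2}\big( A_\nu(\w\mu) - A_\nu(\mu) \big) = \int G_n(u)\mu(u)^T + \mu(u) G_n(u)^T\, d\nu(u) + n^{-1/2}\int G_n(u) G_n(u)^T\, d\nu(u).
\end{align*}
The first term is the image of $G_n$ under a bounded linear map $L_\mu$ from $l^\infty([0,1])^{(p\times q)}$ to $\R^{p\times p}$ (bounded because $\nu$ is a probability measure and $\mu$ is bounded), so by the continuous mapping theorem it converges weakly to $L_\mu$ applied to the Gaussian limit, i.e. to $\int w_3(u)c_F^T + c_F w_3(u)^T\, d\nu(u)$ in case (i) and $\int w_4(u)m(u,\pi)^T + m(u,\pi)w_4(u)^T\, d\nu(u)$ in case (ii). The remainder is $O_P(n^{-1/2})$ since $\|G_n\|_\infty = O_P(1)$ by the weak convergence from step (A) and $\nu$ has total mass one; hence it is $o_P(1)$ and Slutsky's lemma finishes the argument.

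The only genuinely delicate point is making the linear functional $L_\mu$ manifestly continuous on $l^\infty([0,1])^{(p\times q)}$: one should check that $u \mapsto \mu(u)$ is $\nu$-integrable (which holds because $\mu = c_F$ or $m(\cdot,\pi)$ is continuous, hence bounded and measurable) and bound $\|L_\mu(h)\| \leq 2\,\|\mu\|_\infty\,\|h\|_\infty$, which is immediate. A secondary technical remark is that in case (ii) one must be slightly careful about how $c_F$ and the Gaussian limit are extended outside $[0,1]$ so that $u \pm \pi/2$ makes sense; this is cosmetic, since for the SIR-type weights $\nu$ only charges points $(2h-1)/(2H)$ for which both $u+\pi/2$ and $u-\pi/2$ lie in $[0,1]$, and more generally one can assume $\nu$ is supported on $[\pi/2, 1-\pi/2]$. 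Given the machinery of Section \ref{s2}, I expect the whole argument to be short; the continuous mapping theorem applied to $L_\mu$ plus the $o_P(n^{-1/2})$ control of the quadratic remainder is the entire content.
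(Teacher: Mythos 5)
Your proposal is correct and follows essentially the same route as the paper: both expand the quadratic form $\w\mu\w\mu^T-\mu\mu^T$ into a linear leading term plus a quadratic remainder that is $o_P(n^{-1/2})$ once $\|n^{1/2}(\w\mu-\mu)\|_\infty=O_P(1)$ is known from Theorem \ref{th2}, and both then pass to the limit through the (bounded, linear, hence continuous) integral map via the continuous mapping theorem. The only cosmetic difference is that the paper establishes weak convergence of the process $u\mapsto n^{1/2}(\w\mu(u)\w\mu(u)^T-\mu(u)\mu(u)^T)$ in $l^\infty$ first and integrates afterwards, whereas you fold the integration into the linear functional $L_\mu$ from the start; the substance is identical.
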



\begin{proof}
Since the proofs of (i) and (ii) are very similar we focus on (ii). Invoking Theorem \ref{th1} and the continuous mapping theorem stated for instance in \cite{vandervaart1996}, page 20, as Theorem 1.3.6,  we obtain the weak convergence of $\{\sqrt n (\w m(u,\pi) - m(u,\pi))\}_{u\in [0,1]}$ to the Gaussian process $w_4$. For every $u\in [0,1]$ and $\pi\in [0,1]$, one can write
\begin{align*}
&\w m(u,\pi) \w m(u,\pi)^T - m(u,\pi) m(u,\pi)^T \\
 &= \big(\w m(u,\pi) -  m(u,\pi)\big) m(u,\pi)^T + m(u,\pi)\big(\w m(u,\pi)- m(u,\pi)\big)^T \\
&\hspace{3cm}+\big(\w m(u,\pi)- m(u,\pi)\big)\big(\w m(u,\pi)- m(u,\pi)\big)^T,
\end{align*}
then, as a consequence of the Delta-method, $\{\sqrt n (\w m(u,\pi) \w m(u,\pi)^T - m(u,\pi) m(u,\pi))^T\}_{u\in [0,1]} $ converges weakly to $\{w_4(u) m(u,\pi)^T+m(u,\pi) w_4(u)^T\}_{u\in [0,1]}$. Finally applying the continuous mapping theorem to the previous process with the map $f\mapsto \int f(u)d\nu(u)$ , we obtain the statement of the theorem.
\end{proof}

\begin{remark}[coverage property]\label{remarkcoveringtheCS}\normalfont

A comparison between the spaces generated by CUME and SIR is relevant to highlight the differences between continuous and discrete methods. The space that SIR estimates is
\begin{align*}
E_{\t{SIR}}^H = \spann \{ m((2h-1)/2H,1/H ),\  h=1,\ldots,H \},
\end{align*}
and under the conditions of Theorem 3 in \cite{portier2013}, for $H$ sufficiently large, $E_{\t {SIR}}^H=E_c$. This result is important because it ensures that when $H$ increases, SIR eventually estimates the whole subspace. Nevertheless, this is not sufficient to guarantee a complete estimation of $E_c$ since in practice, we do not know how to choose $H$. The space estimated by CUME is
\begin{align*}
E_{\t{CUME}} = \spann \{c_F(u ) , \ u\in[0,1]\}.
\end{align*}
It follows that $E_{\t{SIR}}^H\subset  E_{\t{CUME}}   \subset E_c$. As a consequence, compared with SIR, the method CUME is more likely to recover a larger subspace within $E_c$.
\end{remark}

The bootstrap is made through
\begin{align*}
  A_{\nu}(\w \mu ^* ) = \int  \w \mu^*(u)\w \mu^*(u)^T d \nu (u),
\end{align*}
where $\w\mu^*$ is a bootstrap version of $\mu$ that can be chosen according to the next theorem. We define the bootstrap process $\w m^*(u,\pi) = \w c^*_{\w F^*}(u+\pi/2)-\w c^*_{\w F^*}(u-\pi/2)$.

\begin{theorem}\label{thbootmatrix}
Under (B\ref{Bweightfirst}) to (B\ref{Bweightend}), assume that $F$ is continuous and $c_{F}$ is continuously differentiable, then conditionally on the sample,
\begin{align*}
n^{1/2}(  A_{\nu}(\w \mu^*)-  A_{\nu}(\w \mu )) \t{\quad has the same weak limit as\quad } n^{1/2}( A_{\nu}(\w \mu)-A_{\nu}(\mu))  \t{, in probability,}
\end{align*}
provided that (i) $\w \mu ^* =  \w c^*_{\w F^*}$ and $\w \mu  =  \w c_{\w F}$ or (ii)  $\w \mu ^* =  \w m^*(\cdot,\pi)$ and $\w \mu  =  \w m(\cdot,\pi)$.
\end{theorem}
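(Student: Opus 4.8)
The plan is to mirror the proof of Theorem \ref{thconvmatrix}, but now working conditionally on the sample and invoking the bootstrap versions of the two ingredients used there. The argument proceeds in the same two steps (A) and (B): first establish the conditional weak convergence of the bootstrapped process $n^{1/2}(\w\mu^*-\w\mu)$ in $l^\infty([0,1])^{(p\times q)}$, and then push this forward through the quadratic-plus-integral map by a continuous-mapping argument. For step (A), in case (i) the conditional convergence of $n^{1/2}(\w c^*_{\w F^*}-\w c_{\w F})$ toward the same Gaussian limit $w_3$ as $n^{1/2}(\w c_{\w F}-c_F)$ is exactly the second assertion of Theorem \ref{thboot}; in case (ii) one writes $\w m^*(u,\pi)-\w m(u,\pi)$ as the difference of $\w c^*_{\w F^*}-\w c_{\w F}$ evaluated at $u+\pi/2$ and at $u-\pi/2$, and applies the continuous mapping theorem (Theorem 1.3.6 in \cite{vandervaart1996}) to get conditional weak convergence to $w_4(u)=w_3(u+\pi/2)-w_3(u-\pi/2)$, just as in the proof of Theorem \ref{thconvmatrix}.

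For step (B), I would use the algebraic identity already displayed in the proof of Theorem \ref{thconvmatrix}, namely
\begin{align*}
\w\mu^*(u)\w\mu^*(u)^T-\w\mu(u)\w\mu(u)^T
&=\big(\w\mu^*(u)-\w\mu(u)\big)\w\mu(u)^T+\w\mu(u)\big(\w\mu^*(u)-\w\mu(u)\big)^T\\
&\hspace{2cm}+\big(\w\mu^*(u)-\w\mu(u)\big)\big(\w\mu^*(u)-\w\mu(u)\big)^T,
\end{align*}
and note that $\w\mu\to\mu$ uniformly on $[0,1]$ (in probability), so that the first two terms, after multiplication by $n^{1/2}$, converge conditionally to $w_j(u)\mu(u)^T+\mu(u)w_j(u)^T$ ($j=3$ or $4$), while the last term is $O_P(n^{-1/2})$ uniformly and hence negligible. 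Here one invokes the delta-method for the bootstrap (Theorem 3.9.11 in \cite{vandervaart1996}), exactly as Theorem \ref{thboot} does for the one-step process; the map $f\mapsto\int f(u)\,d\nu(u)$ is continuous and linear on $l^\infty([0,1])^{(p\times q)}$, so a final application of the continuous mapping theorem transfers the conditional convergence to $A_\nu$ and identifies the limit as the Gaussian element appearing in Theorem \ref{thconvmatrix}.

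The only genuinely delicate point, as in Theorem \ref{thboot}, is that $\w\mu^*$ is built from the bootstrapped empirical cdf $\w F^*$ and its generalized inverse, so before applying the delta-method one must again perform the ``rank trick'' of Remark \ref{remranktransform}: using the equivalence $\{Y_i\le\w F^{*-}(u)\}\Leftrightarrow\{\w F^*(Y_i)<u+n^{-1}w_{i,n}\}$ and the invariance of the (weighted) ranks under the monotone transform $Y\mapsto F(Y)$, one reduces to the case where the underlying $Y$ is uniform and $\w F^*$ converges to $id_{[0,1]}$, which is the configuration where the quantile map is Hadamard differentiable (Lemma 3.9.23 in \cite{vandervaart1996}). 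Once this reduction is in place, the joint conditional convergence of $n^{1/2}(\w F^*-\w F,\w c^*_F-\w c_F)$ follows from Lemma \ref{lemproduct} together with Theorem 2.1 of \cite{wellner1993b}, and the rest is the routine delta-method bookkeeping already carried out in the proofs of Theorems \ref{th2}, \ref{thboot} and \ref{thconvmatrix}. I therefore expect the write-up to be short, citing those proofs and only spelling out the bootstrap analogue of the rank trick and the quadratic expansion above.
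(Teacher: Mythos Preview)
Your proposal is correct and follows essentially the same approach as the paper: the paper's own proof is a one-sentence instruction to redo the proof of Theorem \ref{thconvmatrix} conditionally on the sample, replacing $\w m(u,\pi)$ by $\w m^*(u,\pi)$ and $m(u,\pi)$ by $\w m(u,\pi)$, which is exactly the scheme you spell out. Your write-up is more detailed than the paper's (in particular you re-explain the rank trick and the delta-method bookkeeping that are already packaged inside Theorem \ref{thboot}), but the underlying argument is the same.
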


\begin{proof}
The proof is similar as the proof of Theorem \ref{thconvmatrix} with the following changes: consider the probability space conditional on the $(Y_i,Z_i)$'s and replace $\w m(u,\pi)$ by $\w m^*(u,\pi)$ and $ m(u,\pi)$ by $\w m(u,\pi)$. 
\end{proof}

\begin{remark}[bootstrapping the slices]\label{remarkbootstrap}\normalfont
An accurate description of the asymptotic distribution is necessary for making precise the inference. By Theorem \ref{thbootmatrix}, our bootstrap procedure is valid and therefore, shall be use to make inference on $ A_{\nu}(\w\mu)$. This is mainly due to the fact that the randomness of the slices has been reproduced by bootstrapping also the estimated cdf of $Y$, e.g. contrary to $\w c^*_{\w F^*}$, the process $\w c^*_{\w F}$ won't produce a valid bootstrap. Hence, other bootstrap techniques that ignore this randomness will fail in bootstrapping the law of $ A_{\nu}(\w \mu)$. Existing bootstrap methods for SIR (\cite{velilla2007}, \cite{portier2013}) consider the slices as fix, and so they are unable to reproduce correctly the law of SIR as it is usually computed. Nevertheless, when testing specific properties of $E_c$, it could happen that both bootstrap, respectively directed by $\w c^*_{\w F^*}$ and $\w c^*_{\w F}$, work (see Section \ref{s33} for more details).

\end{remark}

\begin{remark}[order $2$ moments based methods]\normalfont\label{remarkcomputationintegral2}
Assuming that $E[|Z_1|_2^4]$ is finite, $F$ is continuous and $\t{vec}(C_{F})$ is continuously differentiable, it is an easy exercise to obtain a similar statement as in Theorem \ref{thconvmatrix} (replacing $w_3$ by $W_3$, invoking Corollary \ref{cor2} rather than Theorem \ref{th2}) as well as in Theorem \ref{thbootmatrix} (replacing $c$ by $C$, invoking Corollary \ref{corboot} rather than Theorem \ref{thboot}).
\end{remark}

\subsection{Cram\'er-von Mises tests}\label{s33}

The integral methods of the previous section, such as SIR and CUME, produce accurate estimations of $E_c$ (see for instance the simulation study in \cite{zhu2010}). Nevertheless, the asymptotic distribution of these methods was unknown from the researchers making difficult any inference based on the matrix $A_\nu (\w\mu)$. On the one hand, some authors neglected the effect of the randomness of the slices for SIR (\cite{cook2004}, \cite{cook2005} or \cite{portier2013}), on the other hand, other ones employed in addition the Bentler and Xie's approximation \cite{bentler2000} in order to compute the asymptotic distribution (see \cite{bura2011} and \cite{portier2014}). Here based on the empirical process approach of Section \ref{s2}, the purpose is to demonstrate rigorously that bootstrap leads to accurate inference when testing structural properties of $E_c$. We introduce three tests that asses: the dimension of $E_c$, the no effect of a set of predictors and the contribution of a given method. At the end of the section, we show
that all the tests considered are consistent and that bootstrap is valid to compute their quantiles. All the test statistics that we introduce are of the Cram\'er-von Mises type, i.e. of the form
\begin{align*}
\int |\w f(u) |_F^2 d\nu(u),
\end{align*}
where $\w f$ is a certain process that belongs to $l^{\infty} ([0,1])^{(p\times q)} $ and $|\cdot |_F $ is the Frobenius norm. In our precise situation, because the integrands are piecewise constant, closed-formulas are available, making the tests computationally feasible. This generally no longer happen for Kolmogorov type statistics.

\subsubsection{Testing dimensionality}

In order to determine the dimension $d_0$ of $E_c$, it is usual  to test whether $d_0$ equals a given number, say $d$, against the alternative $d_0$ is larger than $d$, i.e.
\begin{align}\label{testingdimension}
H_0: \ d_0=d\qquad  \t{against} \qquad H_1:\ d_0>d.
\end{align}
Then starting with $d=0$, if rejected we put $d:=d+1$, until the first acceptance. Different approaches that could be use are summarized in \cite{bura2011} and \cite{portier2014}. In the following we focus on the most common test statistic, based on the sum of eigenvalues of $ A_{\nu}(\w\mu)$, given by
\begin{align*}
\w \Lambda_1 =n \sum_{k=d+1}^p \w \lambda_k,
\end{align*}
where the $\w \lambda_k$'s are the eigenvalues of the matrix $A_{\nu}(\w\mu)$, arranged in decreasing order. We have the formula
\begin{align*}
\w \Lambda_1 &=n \t{trace}( \w QA_{\nu}(\w\mu) \w Q)=  n \int | \w Q \w \mu |_F^2d\nu(u), 
\end{align*}
where $\w Q$ is the eigenprojector on the eigenspace associated to the $p-d$ smallest eigenvalues of $A_{\nu}(\w\mu)$.

\subsubsection{Testing a predictor contribution}

 Following \cite{cook2004}, we develop tests of no effect, on the response variable $Y$, of a selected group of predictor, say $\eta^TZ$ where $\eta\in \R^{p\times (p-d)}$ is such that $\eta^T\eta=I$. We define $\beta$ such that $(\beta,\eta)\in \R^{p\times p} $ is an orthogonal matrix. We say that $\eta^TZ$ has no effect on $Y$ if
 \begin{align*}
P(Y\in A|\beta^TZ) =  P(Y\in A|Z),
 \end{align*}
for any Borel set $A$. By \cite{cook2004}, Proposition 1, this is equivalent to  $\eta \in E_c^{\perp}$. As a consequence, we introduce the hypotheses
\begin{align}\label{testpredictor}
H_0:\ \eta \in E_c^{\perp} \qquad \t{against} \qquad H_1:\ \eta \notin E_c^{\perp} .
\end{align}
Under the so-called coverage condition, that basically says that $E_c$ is spanned by $ A_{\nu}(\mu)$, the previous set of hypotheses is equivalent to
\begin{align*}
H_0:\  \eta^T A_{\nu}(\mu) \eta = 0  \quad \t{against} \quad H_1:\ \eta^T   A_{\nu}(\mu) \eta \neq  0.
\end{align*}
Therefore a natural statistic for testing $H_0$ is
\begin{align*}
\w \Lambda_2 = n\t{trace}( \eta^T   A_{\nu}(\w \mu)\eta)= n \int |\eta^T \w \mu (u)|_F^2d\nu(u).
\end{align*}

\subsubsection{Testing a method contribution}\label{s333}

Here we consider a given method whose estimated basis is noted $\w \beta\in \R^{p\times d}$. Let us assume that there exists a basis $\beta\in \R^{p\times d}$ such that $P_{\w \beta}$ converges in probability to $P_\beta$, with the notation $P_\beta=\beta\beta^T $. We want to test whether the method misses a direction (asymptotically), i.e. 
\begin{align}\label{testmethodcontribution}
H_0:\ \eta \in E_c^{\perp} \qquad \t{against} \qquad H_1:\ \eta \notin E_c^{\perp} ,
\end{align}
where $(\beta,\eta)\in \R^{p\times p} $ is an orthogonal matrix.
Let $\w \eta$ be such that $(\w \beta,\w \eta)$ is an orthogonal matrix, our statistic is given by
\begin{align*}
\w \Lambda_3 &=n\t{trace}( \w \eta^T  A_{\nu}(\w\mu)\w \eta)= n \int |\w \eta^T \w \mu(u)|_F^2d\nu(u).
\end{align*}
We have in mind two typical applications. First we aim at testing the so called SIR pathology, i.e. whether an order $1$ moments based method fails in recovering the whole subspace. For that purpose, $\w \beta$ might be for instance the estimated basis of SIR or CUME and $\w \mu$ should be based on the order $2$ process $\w C$. Clearly if the model is subject to the order $1$ pathology, the test shall reject $H_0$. Second the latter procedure can be applied to select the estimated directions for the order $2$ optimal function method introduced in \cite{portier2013}. This method alleviates the assumption CCV and produces accurate estimates but a classical eigenvalue-based selection of the directions fails. The initial test of independence developed in \cite{portier2013} rely on a null hypothesis that is too strong. It is more accurate to apply the above test when $\w \beta$ is the estimated basis of the order $2$ optimal function method.

\subsubsection{Consistency of the tests}\label{s334}

Theoretically, a test is said to be consistent if, as $n$ increase, the level converges to the nominal level and the power goes to $1$. As it will  be stressed out, every of the tests considered previously is consistent. Practically one needs to compute the quantiles of the asymptotic law of the statistic. In our case, those quantiles are difficult to estimate and this could diminish the accuracy of the test \cite{portier2014}. As a consequence, we recommend a bootstrap strategy for computing these quantiles and we show in the next  the consistency of our bootstrap procedure. 

For the sake of generality, we study all the tests (\ref{testingdimension}), (\ref{testpredictor}) and (\ref{testmethodcontribution}) introduced in the previous section. The statistics $\w \Lambda_k$ for $k=1,2,3$, can be written as follows
 \begin{align*}
\w \Lambda_k &= n \int |\w Q_k \w \mu(u)|_F^2d\nu(u) ,
\end{align*}
with $\w Q_1$ the  eigenprojector associated to the $p-d$ smallest eigenvalues of  $\int \w \mu(u)\w \mu(u) ^Td\nu(u)$, $\w Q_2 =\eta \eta^T$ with $\eta\in \R^{p\times(p-d)} $ a basis, and $\w Q_3=\w\eta\w\eta^T$ the orthogonal projector on the orthogonal complement of the estimated space of a given method as it is described in Section \ref{s333}. We also introduce (when they exist) $ Q_1$, the  eigenprojector associated to the $p-d$ smallest eigenvalues of  $\int  \mu(u) \mu(u) ^Td\nu(u)$, $ Q_2 =\eta \eta^T$, and $Q_3$, the limit of $\w Q_3$.

Bootstrap testing requires particular care so that the bootstrap estimator mimics the hypothesis $H_0$ even when $H_1$ is realized \cite{hall1991}, \cite{portier2014}. For statistics of a similar type as $\w \Lambda_1$, \cite{portier2014} shows that the quantiles can be computed using the technique of the constraint bootstrap. Following their approach, we define the bootstrap statistics $\w \Lambda_k^*$'s by
\begin{align*}
\w \Lambda^*_k = n \int |\w Q_k^* \w \mu^*_k(u)|_F^2 d\nu(u) \qquad \t{for } k=1,2,3,
\end{align*}
with $\w Q_1^*$ the eigenprojector associated to the $p-d$ smallest eigenvalues of $\int \w \mu^*_1(u)\w \mu_1^*(u) ^Td\nu(u)$, $\w Q_2^*= \eta \eta^T$,  $\w Q_3^*$ is a bootstrap version of $\w Q_3$, and for every $u\in [0,1]$,
\begin{align}\label{eqbasebootstrap}
\w \mu^*_k(u) = (I-\w Q_k) \w \mu(u) + (\w \mu^*(u)-\w \mu (u)) .
\end{align}
The latter formula is the cornerstone of the bootstrap procedure. It ensures that the bootstrap process $\w \mu_k^*$ is asymptotically contained in a subspace of dimension $d$, making the bootstrap process having a $H_0$-likely behaviour. To guarantee the consistency of the tests, we introduce the following assumptions. A discussion is postponed latter.

\begin{enumerate}[(\t{A}1)]
\item \label{A1}  The process $\mu:[0,1]\r \R^{p\times q}$ is continuous and $\spann (\mu(u),\ u\in [0,1])=  E_c$.
\item \label{A2} The process $ \w \mu :[0,1]\r \R^{p\times q}$ is such that
\begin{align*}
n^{1/2} (
\w \mu - \mu,
\w Q_k - Q_k) \t{ converges weakly in $l^{\infty}([0,1])^{(p\times q)}\times \R^{p\times p}$ to }   (w_\mu,w_Q).
\end{align*}
\item\label{A3}  The process $ \w \mu^* :[0,1]\r \R^{p\times q}$ is such that, conditionally on the sample,
\begin{align*}
n^{1/2} (
\w \mu^* -\w \mu,
\w Q^*_k -\w Q_k) \t{ converges weakly in $l^{\infty}([0,1])^{(p\times q)}\times \R^{p\times p}$ to } (\widetilde w_\mu,\widetilde w_Q), 
\end{align*}
in probability, with $(Q \widetilde w_\mu,\widetilde w_Q\mu )\overset {\t{d} }{=}(Q w_\mu, w_Q \mu)$. 
\end{enumerate}

The previous set of assumptions might be understood as follows. Assumption (A\ref{A1}) is the so called coverage condition that has been used by several authors \cite{cook2005}, \cite{portier2013}. This condition is discussed within the SIR and CUME context in Remark \ref{remarkcoveringtheCS}. Assumptions (A\ref{A2}) and (A\ref{A3}) depends on the test under consideration. When $k=2$, for SIR and CUME, (A\ref{A2}) (resp. (A\ref{A3})) is a straightforward consequence of Theorem \ref{th2} (resp. Theorem \ref{thboot}); for order $2$ moments based methods, it is implied by Corollary \ref{cor2} (resp. Corollary \ref{corboot}). The reader might refer to the mentioned theorems to obtain conditions that guarantee (A\ref{A2}) and (A\ref{A3}). For $k=1,3$, the theorems we just mentioned are not enough to obtain directly (A\ref{A2}) and (A\ref{A3}) because these conditions involve the joint distribution of the process $\w\mu$ with a certain eigenprojector. However they can be ascertained by the additional use of an asymptotic expansion for eigenprojectors e.g. Lemma 4.1 in \cite{tyler1981}. Finally, note that Assumption (A\ref{A3}) is weaker than asking for a complete bootstrap, i.e. that, conditionally on the sample, $n^{1/2} (
\w \mu^* -\w \mu,
\w Q^*_k -\w Q_k)$ has the same weak limit as $n^{1/2} (
\w \mu - \mu,
\w Q_k - Q_k)$, in probability. This will have interesting consequences on the validity of different bootstrap strategies (see the remark bellow).

\begin{proposition}\label{propbootstraptesting}
Under Assumptions (A\ref{A1}), (A\ref{A2}) and (A\ref{A3}), testing (\ref{testingdimension}), (\ref{testpredictor}) or (\ref{testmethodcontribution}) with respectively $\w \Lambda_1$, $\w \Lambda_2$, $\w \Lambda_3$ and calculation of the quantiles with $\w \Lambda_1^*$, $\w \Lambda_2^*$, $\w \Lambda_3^*$ respectively, is consistent.
\end{proposition}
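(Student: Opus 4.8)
The plan is to reduce the consistency of each of the three tests to two facts: under $H_0$, the statistic $\w\Lambda_k$ and the bootstrap statistic $\w\Lambda_k^*$ have the same, atomless, weak limit, so that the bootstrap quantiles converge to the true ones and the level tends to the nominal level; under $H_1$, $\w\Lambda_k$ diverges at the rate $n$ while the bootstrap quantiles stay $O_P(1)$, so the power tends to $1$. The three cases $k=1,2,3$ can be treated simultaneously by exploiting two structural features. First, under $H_0$ one always has $Q_k\mu\equiv 0$: for $k=1$ the coverage condition in (A\ref{A1}) gives that $A_\nu(\mu)$ has range $E_c$, which under $H_0$ is $d$-dimensional, so the eigenprojector $Q_1$ onto its $p-d$ smallest eigenvalues annihilates $E_c$; for $k=2,3$ one has $\eta\in E_c^\perp$ while $\mu(u)\in E_c$ for every $u$. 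Second, the constraint (\ref{eqbasebootstrap}) together with $\w Q_k(I-\w Q_k)=0$ yields the identity
\begin{align*}
\w Q_k^*\w\mu_k^* = (\w Q_k^* - \w Q_k)(I-\w Q_k)\w\mu + \w Q_k^*(\w\mu^*-\w\mu),
\end{align*}
which is exactly what forces the bootstrapped process to behave as though $H_0$ held.

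Under $H_0$ I would first write $\sqrt n\,\w Q_k\w\mu = \w Q_k\sqrt n(\w\mu-\mu) + \sqrt n(\w Q_k - Q_k)\mu$, using $Q_k\mu=0$; by the joint convergence in (A\ref{A2}) together with $\w Q_k\to Q_k$ and Slutsky's lemma, this converges weakly in $l^{\infty}([0,1])^{(p\times q)}$ to $Q_k w_\mu + w_Q\mu$, and the continuous mapping theorem applied to $g\mapsto\int |g(u)|_F^2\,d\nu(u)$ gives $\w\Lambda_k\rightsquigarrow L:=\int |Q_k w_\mu(u)+w_Q\mu(u)|_F^2\,d\nu(u)$. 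For the bootstrap I would multiply the displayed identity by $\sqrt n$ and note that $(I-\w Q_k)\w\mu\to(I-Q_k)\mu=\mu$ in $l^{\infty}$ (because $Q_k\mu=0$) and $\w Q_k^*\to Q_k$, so that by (A\ref{A3}) and a conditional version of Slutsky's lemma, $\sqrt n\,\w Q_k^*\w\mu_k^*$ converges weakly, conditionally on the sample and in probability, to $\widetilde w_Q\mu+Q_k\widetilde w_\mu$; the continuous mapping theorem then yields $\w\Lambda_k^*\rightsquigarrow L^*:=\int |\widetilde w_Q\mu(u)+Q_k\widetilde w_\mu(u)|_F^2\,d\nu(u)$, conditionally and in probability. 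The distributional identity in (A\ref{A3}), namely $(Q_k\widetilde w_\mu,\widetilde w_Q\mu)\overset{\t d}{=}(Q_k w_\mu,w_Q\mu)$, shows $L^*\overset{\t d}{=}L$. Since $L$ is a squared Frobenius-norm functional of a nondegenerate Gaussian process it has an atomless law, so the conditional $(1-\alpha)$-quantile $\w q^*_{1-\alpha}$ of $\w\Lambda_k^*$ converges in probability to the $(1-\alpha)$-quantile $q_{1-\alpha}$ of $L$, and a standard sandwiching argument gives $\P(\w\Lambda_k>\w q^*_{1-\alpha})\to\P(L>q_{1-\alpha})=\alpha$.

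Under $H_1$ the same computations apply, but now $Q_k\mu\not\equiv 0$. One checks that $c_k:=\int |Q_k\mu(u)|_F^2\,d\nu(u)>0$: for $k=1$, $A_\nu(\mu)$ has rank $d_0>d$, so among its $p-d$ smallest eigenvalues there is a positive one and $c_1=\t{trace}(Q_1 A_\nu(\mu)Q_1)>0$; for $k=2,3$, $\ker A_\nu(\mu)=E_c^\perp$ while $\eta\notin E_c^\perp$ (with the limiting $\eta$ when $k=3$), so $c_k=\t{trace}(\eta^T A_\nu(\mu)\eta)>0$. By (A\ref{A2}), $\w Q_k\w\mu\to Q_k\mu$ uniformly, hence $\w\Lambda_k=n(c_k+o_P(1))\overset{P}{\to}+\infty$. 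On the other hand, the displayed identity and (A\ref{A3}) still show that $\sqrt n\,\w Q_k^*\w\mu_k^*$ converges conditionally in probability to $\widetilde w_Q(I-Q_k)\mu+Q_k\widetilde w_\mu$, a tight limit; thus $\w\Lambda_k^*$ converges conditionally to a proper random variable and $\w q^*_{1-\alpha}=O_P(1)$. Combining, $\P(\w\Lambda_k>\w q^*_{1-\alpha})\to 1$, which is the required power statement.

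The main obstacle is the bootstrap side: one must verify that the constraint (\ref{eqbasebootstrap}) indeed pushes $\w\mu_k^*$ into an asymptotically $d$-dimensional subspace, so that $\w Q_k^*\w\mu_k^*$ is of order $n^{-1/2}$ whether or not $H_0$ holds, and that the joint conditional weak convergence in (A\ref{A3}) can be carried through the products in the identity above; once (A\ref{A2})--(A\ref{A3}) are granted this is algebra plus the conditional continuous mapping and Slutsky theorems (as developed in \cite{vandervaart1996}, \cite{wellner1993b}). The only other delicate point is passing from conditional weak convergence of $\w\Lambda_k^*$ to convergence of its quantiles, which rests on the limiting law $L$ being atomless — true because $L$ is a nondegenerate quadratic functional of a Gaussian process.
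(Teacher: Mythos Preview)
Your argument is correct and follows essentially the same route as the paper: the key decomposition you derive for $\w Q_k^*\w\mu_k^*$ is algebraically equivalent to the paper's three-term expansion (your term $\w Q_k^*(\w\mu^*-\w\mu)$ simply absorbs the paper's $\w Q_k(\w\mu^*-\w\mu)+(\w Q_k^*-\w Q_k)(\w\mu^*-\w\mu)$), and both proofs then invoke (A\ref{A2})--(A\ref{A3}) together with Slutsky and the continuous mapping theorem. You are a bit more explicit than the paper about the passage from conditional weak convergence to convergence of quantiles via atomlessness of $L$, which the paper leaves implicit.
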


\begin{proof}

Note that $\w \Lambda_k$ is a continuous transformation of the process $n^{1/2} \w Q_k \w \mu$. Under $H_0$, because (A\ref{A1}) implies that $Q_k\mu=0$, we have
 \begin{align}\label{decomplamba1}
 n^{1/2} \w Q_k \w \mu=n^{1/2}  Q_k( \w \mu-\mu)+ n^{1/2} (\w Q_k-Q_k)\mu + n^{1/2} (\w Q_k-Q_k)( \w \mu-\mu).
\end{align}
Using (A\ref{A2}), $\|\w \mu-\mu\|_\infty\r 0$ in probability, then by Slutsky's Lemma, the last term vanishes asymptotically. Using (A\ref{A3}) and the continuous mapping theorem, the sum of the first two terms in (\ref{decomplamba1}) (and so $n^{1/2} \w Q_k \w \mu$) converges weakly in $l^\infty ([0,1])^{(p\times q)}$. As a consequence of the continuous mapping theorem, under $H_0$, $\w\Lambda_k$ converges weakly to a real random variable. Under $H_1$, it is easy to show that $|Q_k\mu(u)|_2>0 $ for a certain $u\in[0,1]$, making $\w \Lambda_k$ going to infinity in probability. 

Consequently it is enough to show that the bootstrap statistic (i) has the same behaviour as the statistic under $H_0$, and (ii) remains bounded in probability under $H_1$. For (i), note that $\w \Lambda_k^*$ is a continuous transformation of the process  $n^{1/2} \w Q_k^* \w \mu_k^*$ that can be written as
 \begin{align*}
n^{1/2} \w Q_k \w \mu^*_k+n^{1/2} (\w Q_k^*-\w Q_k) \w \mu^*_k 
\end{align*}
then using the definition of  $\w \mu^*_k$, we get that
\begin{align*}
n^{1/2} \w Q_k^* \w \mu^*_k &= n^{1/2} \w Q_k (\w \mu^*-\w \mu) + n^{1/2} (\w Q_k^*-\w Q_k) (I-\w Q_k) \w \mu +n^{1/2} (\w Q_k^*-\w Q_k) (\w \mu^*-\w \mu ).
\end{align*}
The latter term is asymptotically neglectable by (A\ref{A3}), it follows that
\begin{align*}
n^{1/2} \w Q_k^* \w \mu^*_k &= n^{1/2} Q_k (\w \mu^*-\w \mu)+ n^{1/2} (\w Q_k^*-\w Q_k) (I-Q_k)  \mu
+ o_p(1).
\end{align*}
Since under $H_0$, $(I-Q_k)  \mu(u)= \mu(u)$, using (A\ref{A3}) and the continuous mapping theorem is enough to show that conditionally on the sample, $n^{1/2} \w Q_k^* \w \mu_k^*$ has the same asymptotic law as $n^{1/2} \w Q_k \w \mu_k$, in probability. Then invoking again the continuous mapping theorem provide the same conclusion with $\w\Lambda_k^*$ and $\w\Lambda_k$. Under $H_1$, in light of the latter representation and by (A\ref{A3}), conditionally on the sample, the sequence  $n^{1/2} \w Q_k^* \w \mu_k^*$ is tight.
\end{proof}


\begin{remark}[other bootstrap strategies]\normalfont\label{remarlbootstrapstrategy}
As we have highlighted (see Remarks \ref{remregressionexample} and \ref{remarkbootstrap}), the natural bootstrap candidate for $\w c_{\w F}$ is given by $\w c^*_{\w F^*}$ (rather than $\w c^*_{\w F}$), in which the estimated cdf $\w F$ has been bootstrapped. Because the randomness of the slices (carried by $\w F$) affects the limiting distribution, this can be seen, at first glance, as a necessary evil. In our particular context given by (\ref{testingdimension}), (\ref{testpredictor}) and (\ref{testmethodcontribution}), and under the linearity condition, it is in fact not essential to bootstrap $\w F$. Indeed, Assumptions (A\ref{A3}) only requires that the bootstrap estimator reproduces the law of $Q \sqrt n (\w c_{\w F} - c_{F})$ where $Q$ stands for the orthogonal projector on a given subspace of $E_c^\perp$. In light of the proof of Theorem \ref{th2}, we have that $\sqrt n (\w c_{\w F} - c_{F})$ has the following limiting distribution 
\begin{align*}
w_1-  \partial c_F B,
\end{align*}
where $(w_1,B)$ is a certain Gaussian process. Since for any $u\in[0,1]$, $\partial c_F(u) =E(Z| Y=F^-(u))   $, using the linearity condition we have that $\partial c_F\in E_c$. Multiplying by $Q$ the latter representation, we obtain that the asymptotic law of $Q \sqrt n (\w c_{\w F} - c_{F})$ is reduced to the representation $Q w_1$. As a consequence, the part $\partial c_F B$ in the asymptotic variance does not matter here, and so the bootstrap estimator given by $\w c^*_{\w F}$, satisfies assumption (A\ref{A3}) as well as $\w c^*_{\w F^*}$ does. Either for SIR or CUME, using $\w c^*_{\w F}$ is computationally less intensive than using $\w c^*_{\w F^*}$ because it preserves the slicing initially used for the estimator $\w c_{\w F}$. 
\end{remark}

\section{Simulations}\label{s4}

In this section, we study the accuracy of the bootstrap approximation facing one of the Cram\'er-von Mises tests introduced in Section \ref{s33}. We focus on the test of significance of some sets of predictors $\eta^TX$ described by (\ref{testpredictor}) and we consider the performance of both methods SIR and CUME with the statistic $\w \Lambda_2^*$. Our aim is to analyse quite difficult situations from small to moderate sample size.

Given i.i.d. observations from a regression model, we test whether a vector $\eta$ is orthogonal to $E_c$ or not. The statistics of interest are related to SIR with $H$ slices and CUME, each is given respectively by
\begin{align*}
&\w\Lambda_2^{\text{SIR}} =  n \int |\w Q_{\eta}  \w m (u,H^{-1}) |_F^2d\nu_d (u)\\
&\w\Lambda_2^{\text{CUME}} =  n \int |\w Q_{\eta} \w c_{\w F} (u) |_F^2d \nu_c (u),
\end{align*}
where $\w Q_{\eta}$ is the orthogonal projector on the space generated by $\w \Sigma^{-1/2}\eta$, $\w \Sigma$ is the classical estimator of the variance of $X$, and $\nu_d$ (resp. $\nu_c$) is the uniform probability measure on the set $\{(2h-1)/(2H),\ h=1,\ldots, H\}$ (resp. on the set $[0,1]$). The process $\w c_{\w F}$ and $\w m$ are the same as the ones define in the paper except that from now on, we estimate the mean and the variance of $X$.

The bootstrap estimators are computed following Equation (\ref{eqbasebootstrap}). As pointed out in Remark \ref{remarlbootstrapstrategy}, there are two different bootstrap strategies that are available to compute the quantiles of the test. The first one involves $\w c^*_{\w F^*}$ and gives, for instance, for CUME
\begin{align*}
 n \int |\w Q_{\eta}^* \{\w c^*_{\w F^*} (u) -\w Q_{\eta}\w c_{\w F} (u)\}|_F^2d \nu_c (u),
\end{align*}
where $\w Q_{\eta}^*$ is the orthogonal projector on the space generated by $\w \Sigma^{*-1/2}\eta$ and $\w \Sigma^*$ is defined in Remark \ref{remarkbootstrapsigma} bellow. This bootstrap is abbreviated in the next SIRb1 and CUMEb1. The second bootstrap involves $\w c^*_{\w F}$ and gives, for instance, for CUME
\begin{align*}
 n \int |\w Q_{\eta}^* \{\w c^*_{\w F} (u) -\w Q_{\eta}\w c_{\w F} (u)\} |_F^2d \nu_c (u),
\end{align*}
it is abbreviated by SIRb2 and CUMEb2. To compute a quantile of level $\alpha$, we draw independently $B$ bootstrap statistics and then calculate the empirical quantile of level $\alpha$ associated to this sample.

\begin{remark}[computation of CUME]\label{remarkcomputation}\normalfont
Either for the estimator or the bootstrap, integrals associated to CUME are computed easily because the integrands are piecewise constant. For instance, one may show that $\w\Lambda_2^{\text{CUME}} = n^{-1} \sum_{i=1}^n | \w Q_{\eta} \w c_{id} (Y_i)|_2^2$, and the same kind of formulas can be derived for the bootstrap statistics. Because, the integrand of the method b1 has $2n$ jumps whereas the integrand of the method b2 has $n$ jumps, the method b2 is less intensive computationally.

\end{remark}

\begin{remark}[standardizing the bootstrap]\label{remarkbootstrapsigma}\normalfont
For the sake of completeness, in this section we have leaved the theoretical framework of the paper that supposed to be known the mean and the variance of $X$. To build our estimators, we have plugged the classical estimators of the latter quantities in the initial estimators. This naturally induces an additional part in the asymptotic distribution. We account for this part by bootstrapping also the mean and the variance by respectively 
\begin{align*}
\overline{wX}= n^{-1}\sum_{i=1}^n w_{i,n}X_i \qquad \text{and}\qquad \w\Sigma^* = n^{-1}\sum_{i=1}^n w_{i,n}(X_i-\overline{wX})(X_i-\overline{wX})^T.
\end{align*} 
Note that they are used to standardized the predictors as well as to standardized the set of directions $\eta$ under test.
\end{remark}

We consider the following models:
\begin{align}
Y&=X_1 + \sigma e,\label{eqmodels1}\\
Y&=\frac{ X_1}{ 0.5+(2+X_2+X_3)^2} +\sigma e,\label{eqmodels2}\\
Y&=\exp(X_1) \times \sigma  e,\label{eqmodels3}
\end{align}
where $(X,e)\in \R^{5} $ follows a standard normal distribution. Model (\ref{eqmodels1}) has already been considered in Section \ref{s31} in order to highlight the influence of the randomness of the slices on the asymptotic distribution of the estimators. Model (\ref{eqmodels2}) is borrowed from \cite{li1991} and Model (\ref{eqmodels3}) represents a regression model with non-additive noise. Variations of $\sigma$ permits to switch from easy to more difficult situations. We have ran $1000$ Monte-Carlo replication, for which we have performed the test under $H_0$ (when $\eta= (0,0,0,1)$) and under $H_1$ (when $\eta=(1,0,0,0)$) with SIRb1\&2 and CUMEb1\&2 at the nominal level of $\alpha = 5\%$. In each case, the bootstrap sample number was equal to $B=500$. The number of rejections in each situation is given along the tables \ref{tab1} to \ref{tab3}.

\newcolumntype{A}{ >{\columncolor[gray]{.8}}c }

\begin{table}\centering \small
\begin{tabular}{  l c c | c c c c c || c c c c c}
 \hline
\multirow{2}{*}{$\sigma$} &   \multirow{2}{*}{$n$} & \multirow{2}{*}{$H_0$} &  \multicolumn{4}{ >{\columncolor[gray]{.7}}c|  }{SIRb1} & \multicolumn{1}{ >{\columncolor[gray]{.7}}c || }{CUMEb1}& \multicolumn{4}{ >{\columncolor[gray]{.7}}c| }{SIRb2} & \multicolumn{1}{ >{\columncolor[gray]{.7}}c }{CUMEb2} \\ 

 & & & \multicolumn{1}{ >{\columncolor[gray]{.7}}c }{($H$) 3} & \multicolumn{1}{ >{\columncolor[gray]{.7}}c }{5} & \multicolumn{1}{ >{\columncolor[gray]{.7}}c }{7}& \multicolumn{1}{ >{\columncolor[gray]{.7}}c | }{10}& \multicolumn{1}{ >{\columncolor[gray]{.7}}c||  }{}& \multicolumn{1}{ >{\columncolor[gray]{.7}}c }{($H$) 3}&\multicolumn{1}{ >{\columncolor[gray]{.7}}c }{5}& \multicolumn{1}{ >{\columncolor[gray]{.7}}c }{7}&\multicolumn{1}{ >{\columncolor[gray]{.7}}c| }{10} & \multicolumn{1}{ >{\columncolor[gray]{.7}}c }{}\\
 
\hline
\hline
\multirow{8}{*}{$.5$} & \multirow{2}{*}{$30$}& $T$     &14  &       3  &       0  &       0   &     20   &    156  &     192  &     207 &      224  &     173  \\ 
      &                           & $F$    &     1000     &  999   &    988   &    804   &   1000  &    1000   &   1000    &  1000   &   1000  &    1000 
\\  \cline{4-13}
& \multirow{2}{*}{$50$}& $T$      &   15  &       2      &   0    &     0  &      27  &     107  &   129 &      123 &      116  &     108   \\ 
        &                         & $F$    &      1000  &    1000   &   1000   &   1000     & 1000   &   1000  &    1000  &    1000   &   1000   &   1000 \\  \cline{4-13}
& \multirow{2}{*}{$100$}& $T$         &   21  &       7  &       3   &      0   &     25     &   71    &   107     &   79    &    85  &      81   \\ 
          &                       & $F$    &      1000  &    1000   &   1000   &   1000     & 1000   &   1000  &    1000  &    1000   &   1000   &   1000 \\  \cline{4-13}
& \multirow{2}{*}{$200$}& $T$     &   39    &    14   &      8   &      0   &     31   &     72   &     69  &      66   &     63  &      57   \\ 
            &                     & $F$    &      1000  &    1000   &   1000   &   1000     & 1000   &   1000  &    1000  &    1000   &   1000   &   1000 \\  \hline
 \hline
\multirow{8}{*}{$1$} & \multirow{2}{*}{$30$}& $T$     &       17  &       0    &     0  &       0   &     38  &     151  &     180  &     199   &    213  &     151 
 \\ 
      &                           & $F$    &          935    &   794  &     591    &   186   &    990   &    978  &     962  &     960  &     921  &     993 

\\  \cline{4-13}
& \multirow{2}{*}{$50$}& $T$      &    31   &      1  &       0   &      0   &     61    &   132    &   139   &    141 &      136 &      128   \\ 
        &                         & $F$    &           1000  &     991   &    971    &   809    &  1000    &  1000   &    999  &    1000  &     994 &     1000 
 \\  \cline{4-13}
& \multirow{2}{*}{$100$}& $T$         &         22   &      8     &    2    &     0   &     54 &       69 &       64   &     90   &     87 &       84 
   \\ 
          &                       & $F$    &      1000  &    1000   &   1000   &   1000     & 1000   &   1000  &    1000  &    1000   &   1000   &   1000 \\  \cline{4-13}
& \multirow{2}{*}{$200$}& $T$     &     34  &      16  &       4    &     1    &    60   &     77    &    82  &      67&        76 &       89  \\ 
            &                     & $F$    &      1000  &    1000   &   1000   &   1000     & 1000   &   1000  &    1000  &    1000   &   1000   &   1000 \\  \hline

\end{tabular}\caption{Estimated level and power in Model (\ref{eqmodels1}) for $\alpha=5\%$ with $1000$ replications.}\label{tab1}
\end{table}

\begin{table}\centering \small
\begin{tabular}{  l c c | c c c c c || c c c c c}
 \hline
\multirow{2}{*}{$\sigma$} &   \multirow{2}{*}{$n$} & \multirow{2}{*}{$H_0$} &  \multicolumn{4}{ >{\columncolor[gray]{.7}}c|  }{SIRb1} & \multicolumn{1}{ >{\columncolor[gray]{.7}}c || }{CUMEb1}& \multicolumn{4}{ >{\columncolor[gray]{.7}}c| }{SIRb2} & \multicolumn{1}{ >{\columncolor[gray]{.7}}c }{CUMEb2} \\ 

 & & & \multicolumn{1}{ >{\columncolor[gray]{.7}}c }{($H$) 3} & \multicolumn{1}{ >{\columncolor[gray]{.7}}c }{5} & \multicolumn{1}{ >{\columncolor[gray]{.7}}c }{7}& \multicolumn{1}{ >{\columncolor[gray]{.7}}c | }{10}& \multicolumn{1}{ >{\columncolor[gray]{.7}}c||  }{}& \multicolumn{1}{ >{\columncolor[gray]{.7}}c }{($H$) 3}&\multicolumn{1}{ >{\columncolor[gray]{.7}}c }{5}& \multicolumn{1}{ >{\columncolor[gray]{.7}}c }{7}&\multicolumn{1}{ >{\columncolor[gray]{.7}}c| }{10} & \multicolumn{1}{ >{\columncolor[gray]{.7}}c }{}\\
 
\hline
\hline
\multirow{8}{*}{$.1$} & \multirow{2}{*}{$30$}& $T$     &       11    &     2 &        0 &        0  &      36    &   175    &  191 &      209 &      213  &     174 
 \\ 
      &                           & $F$    &            993  &     946    &   870     &  450      &1000     & 1000      & 997    &   995   &    995     & 1000 

\\  \cline{4-13}
& \multirow{2}{*}{$50$}& $T$      &        17   &      1      &   0       &  0      &  33      & 125    &   111    &    99   &    131    &   121   \\ 
        &                         & $F$    &           1000   &   1000  &    1000   &    983  &    1000   &   1000   &   1000  &    1000 &     1000&      1000 
 \\  \cline{4-13}
& \multirow{2}{*}{$100$}& $T$         &        26  &       4 &        0   &      0    &    32    &    90    &    75    &   100    &    81  &      86 
  \\ 
          &                       & $F$    &      1000  &    1000   &   1000   &   1000     & 1000   &   1000  &    1000  &    1000   &   1000   &   1000 \\  \cline{4-13}
& \multirow{2}{*}{$200$}& $T$     &         30  &      21   &      3   &      0    &    39     &   63     &   67    &    71 &       69&        67           \\ 
            &                     & $F$    &      1000  &    1000   &   1000   &   1000     & 1000   &   1000  &    1000  &    1000   &   1000   &   1000 \\  
            \hline
 \hline
\multirow{8}{*}{$.5$} & \multirow{2}{*}{$30$}& $T$     &              16    &     1   &      0    &     0    &    76    &   156     &  162     &  188    &   197  &     161 
 \\ 
      &                           & $F$    &          553   &    296   &    137    &    17    &   786     &  736   &    732  &     723  &     661&       835 

\\  \cline{4-13}
& \multirow{2}{*}{$50$}& $T$      &  27   &      2   &      0   &      0   &     63  &     115   &    114   &    133   &    107    &   103    \\ 
        &                         & $F$    &            803   &    672  &     498    &   176      & 946      & 897       &892     &  872 &      832 &      956 
 \\  \cline{4-13}
& \multirow{2}{*}{$100$}& $T$         &        30   &     11  &       2   &      1    &    72   &     79  &      81  &     109  &      83   &     96 
   \\ 
          &                       & $F$    &            989    &   977  &     954 &      821  &    1000   &    998    &   998   &    993&       991  &    1000 
 \\  \cline{4-13}
& \multirow{2}{*}{$200$}& $T$     &          32   &     17    &     3    &     0    &    48   &     65  &      59  &      63&        72  &      62 
   \\ 
            &                     & $F$    &      1000  &    1000   &   1000   &   1000     & 1000   &   1000  &    1000  &    1000   &   1000   &   1000 \\  \hline

\end{tabular}\caption{Esimtated level and power in Model (\ref{eqmodels2}) for $\alpha=5\%$ with $1000$ replications.}\label{tab2}
\end{table}

\begin{table}\centering \small
\begin{tabular}{  l c c | c c c c c || c c c c c}
 \hline
\multirow{2}{*}{$\sigma$} &   \multirow{2}{*}{$n$} & \multirow{2}{*}{$H_0$} &  \multicolumn{4}{ >{\columncolor[gray]{.7}}c|  }{SIRb1} & \multicolumn{1}{ >{\columncolor[gray]{.7}}c || }{CUMEb1}& \multicolumn{4}{ >{\columncolor[gray]{.7}}c| }{SIRb2} & \multicolumn{1}{ >{\columncolor[gray]{.7}}c }{CUMEb2} \\ 

 & & & \multicolumn{1}{ >{\columncolor[gray]{.7}}c }{($H$) 3} & \multicolumn{1}{ >{\columncolor[gray]{.7}}c }{5} & \multicolumn{1}{ >{\columncolor[gray]{.7}}c }{7}& \multicolumn{1}{ >{\columncolor[gray]{.7}}c | }{10}& \multicolumn{1}{ >{\columncolor[gray]{.7}}c||  }{}& \multicolumn{1}{ >{\columncolor[gray]{.7}}c }{($H$) 3}&\multicolumn{1}{ >{\columncolor[gray]{.7}}c }{5}& \multicolumn{1}{ >{\columncolor[gray]{.7}}c }{7}&\multicolumn{1}{ >{\columncolor[gray]{.7}}c| }{10} & \multicolumn{1}{ >{\columncolor[gray]{.7}}c }{}\\
 
\hline
\hline
\multirow{8}{*}{$.5$} & \multirow{2}{*}{$30$}& $T$     &             23    &     3  &       0 &        0    &    80    &   148   &    190  &     208  &     214    &   134 
 \\ 
      &                           & $F$    &              511      & 410     &  241    &    25  &     264   &    847   &    940    &   946     &  900   &    780

\\  \cline{4-13}
& \multirow{2}{*}{$50$}& $T$      &               23    &     0 &        0  &       0   &     72    &   115     &  130    &   134  &     124  &     107 
  \\ 
        &                         & $F$    &                895  &     924  &     857 &      553 &      748  &     974 &     993 &      993  &     993   &    966 
 
 \\  \cline{4-13}
& \multirow{2}{*}{$100$}& $T$       &               35  &       5 &        1  &       1   &     66  &      79  &      80 &       89  &      79   &     90 
  \\ 
          &                       & $F$    &                    997   &   1000 &     1000    &  1000   &   1000  &     999&      1000 &     1000 &     1000&     1000 
 \\  \cline{4-13}
& \multirow{2}{*}{$200$}& $T$     &            35 &       21   &      5   &      1    &    60   &     67  &      63 &       66 &       67 &       70         \\ 
            &                     & $F$    &      1000  &    1000   &   1000   &   1000     & 1000   &   1000  &    1000  &    1000   &   1000   &   1000 \\

 \hline
 \hline
\multirow{8}{*}{$1$} & \multirow{2}{*}{$30$}& $T$     &                  27    &     3   &      0   &      0   &     93   &    159   &     179   &    190   &    236    &   158 

 \\ 
      &                           & $F$    &                518&       441&       277 &       29  &     289  &     834 &      926  &     939  &     912  &     794 

\\  \cline{4-13}
& \multirow{2}{*}{$50$}& $T$      &         23   &      3    &     2    &     0    &    71    &   100   &     99   &    111  &     119 &       96 
   \\ 
        &                         & $F$    &                 897  &     935  &     865 &      547 &      739 &      974 &      997   &    996 &      996&       974 
 \\  \cline{4-13}
& \multirow{2}{*}{$100$}& $T$         &               27  &       8   &      0     &    0     &   65      &  85    &    84 &      101 &       94  &      81 
   \\ 
          &                       & $F$    &                 999  &    1000   &   1000    &  1000  &    1000  &    1000 &     1000   &   1000   &   1000&      1000 
 \\  \cline{4-13}
& \multirow{2}{*}{$200$}& $T$     &               37   &     14    &     3     &    0     &   52    &    75  &      61 &       69   &     72   &     61 
   \\ 
            &                     & $F$    &      1000  &    1000   &   1000   &   1000     & 1000   &   1000  &    1000  &    1000   &   1000   &   1000 \\  \hline
\end{tabular}\caption{Esimtated level and power in Model (\ref{eqmodels3}) for $\alpha=5\%$ with $1000$ replications.}\label{tab3}
\end{table}

The conclusions might be drawn as follows, first comparing SIR and CUME and second evaluating the differences between b1 and b2.

Comparing SIR and CUME, we must raise from the start, that contrary to SIR, CUME no longer depends on the number of slices $H$. Unfortunately, we see that SIR is strongly affected by changes in $H$, notably at small sample sizes. For instance, under the null in Model (\ref{eqmodels3}), as soon as $H$ is large, b1 no longer rejects the null while b2 reject the null $20\%$ of the times. Looking at the complete picture offered by all the tables, the smaller $H$ the better, making the SIR $3$-slices approach the best competitor for facing CUME. In spite of this ``a posteriori" and advantageous selection of $H$, SIR does not perform better than CUME. For additive models (\ref{eqmodels1}) and (\ref{eqmodels2}), it seems preferable to use CUME over SIR whereas for Model (\ref{eqmodels3}), the situation is slightly mitigated by the high power provided by SIR. To conclude, CUME offers a more simple (no selection of $H$) approach than SIR and among the considered models, it is more accurate to test with CUME rather than SIR.

Both bootstrap tests b1 and b2 converge to the nominal level but with (in average) opposite signs, i.e. b1 tends to underestimate the level while b2's estimated level is always greater than the nominal one. This suggests that b1 is more conservative than b2. Meanwhile, the power associated to b2 is always greater than the power associated to b1. Then and in particular for SIR, both are difficult to compare, and the choice between b1 and b2 should be done with care by the user given the trade-off between conservativeness and powerfulness. For CUME, the situation is rather different than for SIR and the clear winner is b1, notably because of the too high level of type I error committed by b2.

\section{Conclusion}\label{s5}

We have provided a new approach for inverse regression based on empirical processes. This approach has offered a precise description of the asymptotic behaviour of the estimators as well as the validity of the bootstrap. The framework we develop in the paper is linked with the class of indicator functions. This choice was convenient since the metric entropy properties of this class are widely known, but also because of the natural link it induced with the popular methods SIR and CUME. However, the approach developed in this paper can be extended to different classes of functions than indicators. Indeed, for the order $1$ moments based method, one can consider the vector
\begin{align*}
E[X\psi(Y)],
\end{align*}
when $\psi$ varies among a certain family of functions. Another subject of interest for further studies is right-censored data. Suppose we observe
\begin{align*}
\min(Y,C)\quad \t{and} \quad \mathds 1 _{\{Y\leq C\}}, \qquad \t{where}\qquad Y\indep C|X,
\end{align*}
variations of SIR have been studied for instance in \cite{li1999} and \cite{kosorok2011}. It requires a smoothing procedure in order to take into account the effect of the censure.

\paragraph{Acknowledgement.} The author would like to thank Bernard Delyon for helpful comments and advices on this article. He also thank Zhenghui Feng for sharing the Matlab code of CUME.

\def\cprime{$'$}

\bibliographystyle{plain}

\end{document}